\newcommand{\normdpoly}[1]{\left\| #1\right\|_{\S^n}}
\newcommand{\normdfonc}[1]{\| #1\|_2}
\def\tr{{\rm tr}\,}
\newtheorem{theorem}{Theorem}[section]
\newtheorem{lemma}[theorem]{Lemma}
\newtheorem{corollary}[theorem]{Corollary}
\def\R{\mathbb{R}}
\def\N{\mathbb{N}}
\def\S{\mathbb{S}}
\def\insm{\displaystyle\int_{M}}
\def\vol{dv}
\def\Vol{{\rm Vol}\,}
\def\H{{\rm H}}
\def\B{{\rm B}}
\def\xb{\overline{X}}
\def\la{\lambda_1}
\def\hkm{{\mathcal H}^k(M)}
\def\hkr{{\mathcal H}^k(\R^{n+1})}
\begin{document}
\title[]{Metric shape of hypersurfaces with small extrinsic radius or large $\lambda_1$}

\subjclass[2000]{53A07, 53C21}

\keywords{Mean curvature, Reilly inequality, Laplacian, Spectrum, pinching results, hypersurfaces}

\author[E. AUBRY, J.-F. GROSJEAN]{Erwann AUBRY, Jean-Fran\c cois GROSJEAN}

\address[E. Aubry]{LJAD, Universit\'e de Nice Sophia-Antipolis, CNRS; 28 avenue Valrose, 06108 Nice, France}
\email{eaubry@unice.fr}

\address[J.-F. Grosjean]{Institut \'Elie Cartan de Lorraine (Math\'ematiques), Universit\'e de Lorraine, B.P. 239, F-54506 Vand\oe uvre-les-Nancy cedex, France}
\email{jean-francois.grosjean@univ-lorraine.fr}

\date{\today}

\begin{abstract} We determine the Hausdorff limit-set of the Euclidean hypersurfaces with large $\lambda_1$ or small extrinsic radius. The result depends on the $L^p$ norm of the curvature that is assumed to be bounded a priori, with a critical behaviour for $p$ equal to the dimension minus $1$.
\end{abstract}

\maketitle
%\tableofcontents

\section{Introduction}

For any $A\subset\R^{n+1}$ and any $\varepsilon>0$, we set $A_\varepsilon$ the tubular neighbourhood of radius $\varepsilon$ of $A$ ($A_\varepsilon=\{x\in\R^{n+1}/\,d(A,x)\leqslant\varepsilon\}$). 
$d_H(A,B)=\inf\{\varepsilon>0\,A\subset B_\varepsilon$ and $B\subset A_\varepsilon\}$ is called the Hausdorff distance on  closed subsets of $\R^{n+1}$. 
Let $(M_k^m)_{k\in\N}$ be a sequence of immersed submanifolds of dimension $m$ in $\R^{n+1}$. 
We say that it converges weakly to a subset $Z\subset\R^{n+1}$ in Hausdorff topology if there exists a sequence of subsets $A_k\subset M_k$ such that $d_H(A_k,Z)\to 0$ and $\Vol(M_k\setminus A_k)/\Vol M_k\to 0$.

Of course, weak Hausdorff convergence does not imply Hausdorff convergence without supplementary assumption. Our first aim will be to determine which $L^p$-norm of the mean curvature has to be bounded so that weak convergence implies convergence. More precisely, we will study the limit-set for the Hausdorff distance of a weakly converging sequence  of submanifolds with $L^p$ norm of the mean curvature uniformly bounded and show that it depends essentially on the value of $p$.
As an application, we derive some new results on the metric shape of Euclidean hypersurfaces with small extrinsic radius or large $\lambda_1$.

\subsection{Weak Hausdorff convergence vs Hausdorff convergence}

In the paper, the $L^p$-norms are defined by $\|f\|_p^p=\frac{1}{v_M}\int_M|f|^p\vol$.  We denote by $m_1$ the 1-dimensional Hausdorff measure on $\R^{n+1}$. We denote by $\B$ the second fundamental form and $\H=\frac{1}{m}\tr \B$ the mean curvature of Euclidean $m$-submanifolds.

Our main result says that if $\Vol M_k\|\H\|_p^{m-1}$ remain bounded for some $p>m-1$ (resp. for $p=m-1$) then weak Hausdorff convergence implies Hausdorff convergence (resp. up to a set of bounded $1$-dimensional Hausdorff measure).

\begin{theorem}\label{Main}
Let $(M_k)_{k\in\N}$ be a sequence of immersed, compact submanifolds of dimension $m$ which weakly converges to $Z\subset\R^{n+1}$.

If there exist $p>m-1$ and $A>0$ such that $\Vol (M_k)\|\H\|^{m-1}_p\leqslant A$ for any $k$, then $d_H(M_k,Z)\to 0$.

There exists a constant $C(m)$ such that if $\Vol (M_k)\|\H\|^{m-1}_{m-1}=\int_{M_k}|\H|^{m-1}\leqslant A$ for any $k$, then the limit-set of $(M_k)_{k\in\N}$ for the Hausdorff distance is not empty and any limit point is a closed, connected subset $Z\cup T\subset\R^{n+1}$ such that $m_1(T)\leqslant C(m)A$.
\end{theorem}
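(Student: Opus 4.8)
\emph{Approach and a priori bounds.} The three engines are the first variation formula for the volume, Topping's estimate relating diameter and $\int|\vec H|^{m-1}$, and the Michael--Simon Sobolev inequality (all valid for immersed submanifolds, with multiplicities); the critical case then needs a covering argument. Throughout, $\vec H=m\,\H$ is the mean curvature vector, $B(x,r)$ Euclidean balls, and for each $k$ we fix $A_k\subset M_k$ and $\rho_k\downarrow 0$ with $A_k\subset Z_{\rho_k}$, $Z\subset(A_k)_{\rho_k}$, $\Vol(M_k\setminus A_k)/\Vol(M_k)\to 0$ (so $Z\neq\emptyset$); we may assume the $M_k$ connected (treat components otherwise) and $m\ge 2$ ($m=1$ being elementary). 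Writing $V(s)=\Vol(M_k\cap B(x,s))$ for $x\in M_k$, the first variation formula (no boundary term, as $M_k$ is closed) gives $mV(s)\le sV'(s)+s\int_{M_k\cap B(x,s)}|\vec H|$ for a.e.\ $s$; this underlies Topping's bound $\operatorname{diam}M_k\le C(m)\int_{M_k}|\vec H|^{m-1}$ and, together with Michael--Simon applied to $f\equiv 1$ (giving $\Vol(M_k)^{(m-1)/m}\le C(m)\int_{M_k}|\vec H|$) and H\"older, the bound $\Vol(M_k)^{1/m}\le C(m)\int_{M_k}|\vec H|^{m-1}$. Since $\|\cdot\|_{m-1}\le\|\cdot\|_p$ for $p\ge m-1$ on the normalized measure, in both regimes $\int_{M_k}|\H|^{m-1}\le\Vol(M_k)\|\H\|_p^{m-1}\le A$, so $\operatorname{diam}M_k\le C(m)A$ and $\Vol(M_k)\le C(m)A^m$ uniformly; in particular the $M_k$ lie in a fixed ball and $v_k:=\Vol(M_k\setminus A_k)\to 0$.

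\emph{The key local estimate (the main obstacle).} The crucial ingredient is a \emph{relative} version of Topping's estimate: there is $c(m)>0$ such that for every compact immersed $m$-submanifold with boundary $N\subset\R^{n+1}$, every $x\in N$, and every $R\le d(x,\partial N)$, either $\Vol(N\cap B(x,R))\ge c(m)R^m$, or $\int_{N\cap B(x,R)}|\vec H|^{m-1}\ge c(m)R$. One proves it by running the differential inequality above on the balls $B(x,s)$, $s\le R$, on which there is no contribution from $\partial N$, exactly as in the proof of Topping's theorem. This is the only place where the exponent $m-1$ is forced, and it is the step requiring genuine work: the delicate point is the integrability near $s=0$ of the mean-curvature term, which is handled by choosing $x$ to be a Lebesgue point of $|\vec H|^{m-1}$.

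\emph{Case $p>m-1$.} Let $x\in M_k$, $t=d(x,Z)$, $R=t-\rho_k$; if $R\le 0$ then $d(x,Z)\le\rho_k$. Every $y\in A_k\subset Z_{\rho_k}$ has $|x-y|\ge t-\rho_k=R$, so $M_k\cap B(x,R)\subset M_k\setminus A_k$. Applying the dichotomy with $N=M_k$ (closed, so $R\le d(x,\partial N)=\infty$ is vacuous): either $c(m)R^m\le v_k$, or $c(m)R\le\int_{M_k\setminus A_k}|\vec H|^{m-1}=m^{m-1}\beta_k$, where $\beta_k:=\int_{M_k\setminus A_k}|\H|^{m-1}$. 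When $p>m-1$, H\"older gives $\beta_k\le A\,(v_k/\Vol M_k)^{1-(m-1)/p}\to 0$; with $v_k\to 0$ this forces $R\le\varepsilon_k$ with $\varepsilon_k\to 0$ independent of $x$. Hence $M_k\subset Z_{\rho_k+\varepsilon_k}$, and since $Z\subset(A_k)_{\rho_k}\subset(M_k)_{\rho_k}$ we get $d_H(M_k,Z)\to 0$.

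\emph{Case $p=m-1$.} By Blaschke's theorem a subsequence $M_{k_j}$ converges for $d_H$ to a compact connected set $W$, and $Z\subset(M_{k_j})_{\rho_{k_j}}$ forces $Z\subset W$; if $W$ is a point we are done, so assume not. Fix $\delta>0$ and $r\in(0,\delta)$, and by the $5r$-covering lemma pick $x_1,\dots,x_L\in S:=W\cap\{d(\cdot,Z)\ge\delta\}$ with the $B(x_l,r)$ pairwise disjoint and $S\subset\bigcup_l B(x_l,5r)$. For $j$ large ($d_H(M_{k_j},W)<r/4$, $\operatorname{diam}M_{k_j}>r/4$, $\delta>\rho_{k_j}+r$) pick $y_l\in M_{k_j}$ with $|y_l-x_l|<r/4$ and let $\Sigma_l$ be the component of $M_{k_j}\cap B(y_l,r/4)$ through $y_l$; by connectedness $\Sigma_l$ reaches $\partial B(y_l,r/4)$, so $d(y_l,\partial\overline{\Sigma_l})=r/4$, and $\Sigma_l\subset M_{k_j}\setminus A_{k_j}$, while $\Sigma_l\subset B(x_l,r/2)\subset B(x_l,r)$ makes the $\Sigma_l$ pairwise disjoint in $M_{k_j}$. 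The dichotomy applied to $\overline{\Sigma_l}$ with $R=r/4$ puts $l$ either in $L_{\mathrm{vol}}$ ($\Vol\Sigma_l\ge c(m)(r/4)^m$) or in $L_{\mathrm{curv}}$ ($\int_{\Sigma_l}|\vec H|^{m-1}\ge c(m)(r/4)$); disjointness then gives $\#L_{\mathrm{vol}}\,c(m)(r/4)^m\le v_{k_j}$ and $\#L_{\mathrm{curv}}\,c(m)(r/4)\le\int_{M_{k_j}}|\vec H|^{m-1}=m^{m-1}\int_{M_{k_j}}|\H|^{m-1}\le m^{m-1}A$. Since $S$ is then covered by $\#L_{\mathrm{vol}}+\#L_{\mathrm{curv}}$ sets of diameter $\le 10r$,
\[
m_1^{(10r)}(S)\ \le\ 10r\,(\#L_{\mathrm{vol}}+\#L_{\mathrm{curv}})\ \le\ \frac{10\cdot 4^m}{c(m)}\,\frac{v_{k_j}}{r^{m-1}}\ +\ \frac{40\,m^{m-1}}{c(m)}\,A .
\]
Letting $j\to\infty$ (so $v_{k_j}\to 0$) and then $r\to 0$ gives $m_1(S)\le C(m)A$; finally letting $\delta\downarrow 0$ and setting $T:=\bigcup_{\delta>0}\bigl(\overline{W\setminus Z}\cap\{d(\cdot,Z)\ge\delta\}\bigr)$ yields $W=Z\cup T$, closed and connected, with $m_1(T)\le C(m)A$. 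The entire difficulty sits in the local estimate; the covering in this last step is routine, and it is the disjointness of the small pieces $\Sigma_l$ that keeps the constant depending on $m$ alone.
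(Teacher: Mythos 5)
Your overall strategy is close in spirit to the paper's (a Topping-type dichotomy plus a covering argument, with the subcritical case handled by H\"older), but the ``key local estimate'' on which everything rests is false as you state it, and the difficulty you flag is not the real one. Take $N$ to be a round $m$-sphere of radius $\rho$, $x\in N$, and any $R\gg\rho$: since $\partial N=\emptyset$ the hypothesis $R\leqslant d(x,\partial N)$ is vacuous, yet $\Vol\bigl(N\cap B(x,R)\bigr)=\Vol N\sim\rho^m\ll R^m$ \emph{and} $\int_{N\cap B(x,R)}|\vec H|^{m-1}\sim\rho\ll R$, so both horns of your dichotomy fail. The ``Lebesgue point'' discussion does not help: for a smooth compact immersion every point is a Lebesgue point of the (continuous) function $|\vec H|^{m-1}$, and the actual obstruction is that in the differential inequality the accumulated term $\int_0^R\mu(r)^{1/(m-1)}r^{-m/(m-1)}\,dr$ (with $\mu(r)=\int_{N\cap B(x,r)}|\vec H|^{m-1}$) cannot be bounded by $\mu(R)/R$ alone; it may be large even when $\mu(R)/R$ is small, because curvature can be concentrated at small scales. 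That is exactly why Topping's lemma (Lemma 4.2 in the paper) is phrased with $\sup_{r\leqslant R}\mu(r)/r$ rather than $\mu(R)/R$.

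The applications you make of the lemma are, however, essentially correct once the lemma is replaced by the right tool. In the case $p>m-1$ you apply it to $N=M_k$ with no boundary, which is precisely the setting of the counterexample; the argument can be repaired (using connectedness: the component of $x$ in $M_k\cap B(x,R)$ must reach $\partial B(x,R)$), but then what you actually need is a statement of the form ``a piece of submanifold of small volume that travels a Euclidean distance $R$ must carry $\int|\vec H|^{m-1}\gtrsim R$,'' and the only proof of that is a Wiener-type covering of a minimizing geodesic by the balls produced at the \emph{varying} scales of Topping's lemma --- i.e.\ the decomposition lemma of Section~4 of the paper. The same remark applies in your critical case to the components $\Sigma_l$: the hypothesis $d(y_l,\partial\overline{\Sigma_l})=r/4$ and the fact that $\Sigma_l$ touches $\partial B(y_l,r/4)$ do make the conclusion true, but your one-line integration of the monotonicity inequality does not prove it; you would need to reproduce the geodesic/Wiener argument inside each $\Sigma_l$. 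So the gap is not in the covering step (which is a clean and somewhat more elementary variant of the paper's use of lower semicontinuity of $m_1$ for trees), but precisely in the local lemma you describe as ``the main obstacle'' and then dispatch too quickly.
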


Note that it derives from the proof that in the case $p=m-1$, we have $m_1(T)\leqslant C(m)\max_{\varepsilon}\liminf_{k}\int_{M_k\setminus Z_\varepsilon}|\H|^{m-1}$.

The previous result is rather optimal as shows the following result.

\begin{theorem}\label{constrfond}
 Let $M_1,M_2\hookrightarrow\R^{n+1}$ be two immersed compact submanifolds with the same dimension $m$, $M_1\#M_2$ be their connected sum and $T$ be any closed subset of $\R^{n+1}$ such that $M_1\cup T$ is connected. Then there exists a sequence of immersions $i_k:M_1\#M_2\hookrightarrow\R^{n+1}$ such that\\

1) $i_k(M_1\# M_2)$ weakly converges to $M_1$ and converges to $M_1\cup T$ in Hausdorff topology,

2) the curvatures of $i_k(M_1\# M_2)$ satisfy
\begin{align*}
&\int_{i_k(M_1\#M_2)}|\H|^{m-1}\to \int_{M_1}|\H|^{m-1}+(\frac{m-1}{m})^{m-1}\Vol\S^{m-1}m_1(T),\\
&\int_{i_k(M_1\#M_2)}|\B|^{m-1}\to \int_{M_1}|\B|^{m-1}+\Vol\S^{m-1}m_1(T),\\
&\int_{i_k(M_1\#M_2)}|\H|^\alpha\to \int_{M_1}|\H|^\alpha\quad\quad\mbox{for any }\alpha\in[1,m-1),\\
&\int_{i_k(M_1\#M_2)}|\B|^\alpha\to \int_{M_1}|\B|^\alpha\quad\quad\mbox{for any }\alpha\in[1,m-1),
\end{align*}

3) $\lambda_p(i_k(M_1\#M_2))\to\lambda_p(M_1)$ for any $p\in\N$,

4) $\Vol(i_k(M_1\#M_2))\to\Vol M_1$.
\end{theorem}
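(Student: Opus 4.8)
The plan is to realise $i_k(M_1\#M_2)$ as $M_1$ with two small geodesic disks removed, glued to a short thin neck ending in a rescaled copy $\rho_kM_2$ of $M_2$ ($\rho_k\to0$) and to a very thin, strongly pinched ``thorn tube'' $N_k$ which follows a finite polygonal tree approximating $T$ and is closed off by round caps at its free ends. A capped thorn is diffeomorphic to $\S^m$ and $M_1\#M_2\#\S^m=M_1\#M_2$, so $i_k$ has the prescribed topology; all the work lies in choosing the radius profile of $N_k$ so that 1)--4) hold simultaneously. First I would reduce to the case where $T$ is a finite tree: since $M_1$ and $T$ are closed with $M_1\cup T$ connected, $M_1\cap T\neq\emptyset$; fix $x_0$ in it and, for each $\delta>0$, choose a finite union $T_\delta$ of segments attached to $M_1$ at $x_0$ with $M_1\cup T_\delta$ connected, $d_H(M_1\cup T_\delta,M_1\cup T)\leqslant\delta$ and $m_1(T_\delta)\to m_1(T)$ (when $m_1(T)=\infty$ one only asks $m_1(T_\delta)\to\infty$, and the right-hand sides of 2) are then $+\infty$; when $m_1(T)<\infty$ this uses the structure of continua of finite linear measure). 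Since the pieces other than $M_1$ and the thorn will contribute $o(1)$ to every quantity below, a diagonal argument reduces the statement to building, for each fixed $T_\delta$, immersions $i_k$ with the four properties relative to $M_1\cup T_\delta$.

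\emph{Construction and 1), 2), 4).} Set the radius of $N_k$ at a point $p$ at tree-distance $d(p)$ from $x_0$ equal to $r_k(p)=\varepsilon_k\,e^{-d(p)/\delta_k}$ with $\varepsilon_k\ll\delta_k\to0$, smoothed near the finitely many vertices; take the neck of length and radius $o(\varepsilon_k)$ and the removed disks of radius $\varepsilon_k$. Because $\varepsilon_k\ll\delta_k$ one has $|r_k'|=O(\varepsilon_k/\delta_k)\to0$ and $|r_k''|=O(\varepsilon_k/\delta_k^2)=o(1/r_k)$, so along $N_k$ the principal curvatures are $(1+o(1))/r_k$ (with multiplicity $m-1$) and $o(1/r_k)$; hence $|\B|=(1+o(1))/r_k$, $|\H|=(1+o(1))\frac{m-1}{mr_k}$, and for every $\alpha\geqslant1$
\[
\int_{N_k}|\B|^\alpha=(1+o(1))\,\Vol\S^{m-1}\!\int_{T_\delta}\! r_k^{\,m-1-\alpha}\,dm_1,\qquad \int_{N_k}|\H|^\alpha=(1+o(1))\Big(\tfrac{m-1}{m}\Big)^{\alpha}\Vol\S^{m-1}\!\int_{T_\delta}\! r_k^{\,m-1-\alpha}\,dm_1 .
\]
For $\alpha=m-1$ the integrand equals $1$, producing the asserted contributions $\Vol\S^{m-1}m_1(T_\delta)$ and $(\frac{m-1}{m})^{m-1}\Vol\S^{m-1}m_1(T_\delta)$; for $\alpha<m-1$ the integrand is $\leqslant\varepsilon_k^{\,m-1-\alpha}\to0$, so the contribution is $o(1)$. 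Each of the vertices, the caps, the neck, $\rho_kM_2$ and the removed disks has volume $O(\mathrm{size}^m)$ and curvature $O(1/\mathrm{size})$, hence contributes $o(1)$ to all of $\int|\B|^\alpha$, $\int|\H|^\alpha$ ($\alpha\leqslant m-1$) and to $\Vol$; moreover $\Vol N_k\sim\varepsilon_k^{\,m-1}\delta_k\to0$. Since $r_k\leqslant\varepsilon_k$ we get $d_H(N_k,T_\delta)\leqslant\varepsilon_k$, and together with $M_1\setminus(\text{disks})\to M_1$ this yields weak convergence to $M_1$, Hausdorff convergence to $M_1\cup T_\delta$, $\Vol(i_k)\to\Vol M_1$, and the four limits of 2).

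\emph{The eigenvalues 3).} On the weighted segment $([0,L],e^{-(m-1)t/\delta_k}\,dt)$ modelling radial functions on a branch of $N_k$, the first eigenvalue above $0$ is $\geqslant\big(\frac{m-1}{2\delta_k}\big)^2$, and the same holds on every sub-branch because the weight decays on scale $\delta_k$ everywhere; together with the lower bound $\gtrsim1/\varepsilon_k^2$ for non-radial functions this forces the Dirichlet spectrum of $N_k$ (Dirichlet on the gluing disk), and likewise of $\mathrm{neck}\cup\rho_kM_2$, to tend to $+\infty$, whereas $\Vol N_k$ and $\Vol(\rho_kM_2)$ tend to $0$. From here $\lambda_p(i_k)\to\lambda_p(M_1)$ follows by the standard two-sided estimate: for $\limsup$, test with the $p+1$ first eigenfunctions of $M_1$ times a cut-off equal to $1$ off a small neighbourhood of the gluing disks and $0$ on the appendages, supported on a disk of capacity $o(1)$; for $\liminf$, given a $(p+1)$-dimensional space of functions on $i_k$ with bounded Rayleigh quotients, bound the $L^2$-mass carried by each appendage by (its Dirichlet energy)$/$(its Dirichlet eigenvalue, which diverges)$\,+\,\Vol(\text{appendage})\cdot(\text{boundary average})^2$, the last term being $O(\varepsilon_k\delta_k)$ since a bounded-energy function on $M_1$ puts only $O(\varepsilon_k^2)$ of $L^2$-mass on a radius-$\varepsilon_k$ disk; thus the mass off $M_1$ is $o(1)$, the restriction to $M_1$ is asymptotically an isometry onto its image, and each Rayleigh quotient is $\geqslant\lambda_p(M_1)-o(1)$.

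The main difficulty is exactly this last (spectral lower) bound: one must rule out a test function localising on the long thin appendage, and a \emph{uniformly} thin tube of length $\ell$ along $T_\delta$ would carry a function of Rayleigh quotient $\sim\pi^2/\ell^2$ and destroy 3) as soon as $m_1(T)$ is large. The point is that replacing a vanishing radius by a \emph{pinching whose rate diverges} kills these would-be eigenfunctions while leaving 1), 2) and 4) untouched; getting the interplay between the rate $1/\delta_k$, the scale $\varepsilon_k$ and the gluing disks right is where the care is needed. A lesser technical point is the initial approximation of an arbitrary closed $T$ by finite trees with $m_1(T_\delta)\to m_1(T)$.
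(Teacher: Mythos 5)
Your construction is in the same spirit as the paper's (flatten $M_1$ near a point, attach a thin tube of revolution tracing a polygonal approximation of $T$, attach a rescaled copy of $M_2$, reduce to finite trees and diagonalise), and your curvature/volume/Hausdorff estimates check out; the genuine divergence is in how you handle item~3). The paper first proves a ``weak version'' with a \emph{fixed} profile $c$ (constant $\equiv 1/10$) on a tube of length $l$, and, following Ann\'e, shows that $\mathrm{Sp}(M_\varepsilon)\to \mathrm{Sp}(M_1)\cup\mathrm{Sp}(P_c)$ where $P_c=-\partial_t^2-(m-1)\tfrac{c'}{c}\partial_t$ on $[0,l]$ with Dirichlet/Neumann conditions. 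Since $\mathrm{Sp}(P_c)\ni\tfrac{\pi^2}{4l^2}$ is small for large $l$, a single long uniform tube contaminates the low spectrum; the paper therefore \emph{iterates} the weak version, splitting $T$ into $l/l_k$ segments of length $l_k$ so that $\lambda_1([0,l_k])>2k$ pushes all contamination above any threshold. You avoid the iteration entirely by using a \emph{single} tube with exponentially decaying radius $r_k=\varepsilon_k e^{-d/\delta_k}$, $\varepsilon_k\ll\delta_k\to0$. The resulting radial operator on the weighted interval has spectrum bounded below by $(\tfrac{m-1}{2\delta_k})^2\to\infty$ (your computation via the substitution $f=e^{(m-1)t/2\delta_k}g$ is correct and the sign of the boundary term works out), while $|r_k'|=r_k/\delta_k\leqslant\varepsilon_k/\delta_k\to0$ and $r_k|r_k''|=r_k^2/\delta_k^2\to0$ keep the pointwise curvature estimate $|\B|=(1+o(1))\sqrt{m-1}/r_k$ uniform along the tube, so 1), 2) and 4) are unaffected. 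What you trade away is the clean structural statement $\mathrm{Sp}(M_\varepsilon)\to\mathrm{Sp}(M_1)\cup\mathrm{Sp}(P_c)$, which the paper proves along the way and reuses elsewhere; you only get the diagonal statement $\lambda_p(i_k)\to\lambda_p(M_1)$. Your $\liminf$ sketch (bounding the $L^2$-mass on the appendages by Dirichlet energy over a diverging Dirichlet eigenvalue, plus a boundary-trace remainder) is essentially the content of the Ann\'e/Rauch--Taylor lemma that the paper invokes; the place where the real work is hidden is exactly the trace estimate on the radius-$\varepsilon_k$ gluing sphere, which the paper handles via the explicit computation giving $\int_{S_{\partial S_\varepsilon}(\varepsilon r)}(f_k^\varepsilon)^2\leqslant c(M_1)\|f_k^\varepsilon\|_{H^1}^2\,\varepsilon|\ln\varepsilon|$; a comparable estimate is needed in your scheme. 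One small point to watch: the constants in 2) for $|\B|$ as written in the theorem omit the factor $(m-1)^{(m-1)/2}$ coming from $|\B|=\sqrt{m-1}/r$, and you reproduce the same omission; this is an inconsistency already present between the paper's statement and its own computation of $\int_{N_{c,\varepsilon}}|\B|^\alpha$, so it is not a flaw specific to your argument.
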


Conditions 3) and 4) imposed to our sequence of immersions in Theorem \ref{constrfond} are designed on purpose for our study of almost extremal Euclidean hypersurfaces for the Reilly or Hasanis-Koutroufiotis Inequalities.

Theorem \ref{Main} proves that for $p>m-1$ the Hausdorff limit-set of a weakly convergent sequence is reduced to the weak limit. On the contrary, Theorem \ref{constrfond} shows that for $p<m-1$, the Hausdorff limit-point of a weakly convergent sequence can be any closed, connected Euclidean subset containing the weak-limit. For the critical exponent $p=m-1$, the Hausdorff limit-set can contain any $Z\cup T$ with $m_1(T)\leqslant C_2(m)A$ (by Theorem \ref{constrfond}) and contains only $Z\cup T$ with $m_1(T)\leqslant C_1(m)A$ (by Theorem \ref{Main}). Unfortunately, our constants $C_1(m)$ and $C_2(m)$ are different. We conjecture that is is only due to lake of optimality of the constant in Theorem \ref{Main}.

Note that the two previous theorems can be easily extended to the case where $\R^{n+1}$ is replaced by any fixed Riemannian manifold $(N,g)$.
 
\subsection{Application to hypersurfaces with large $\lambda_1$ or small Extrinsic radius}

Let $X{:}\,M^n\to\R^{n+1}$ be a closed, connected, immersed Euclidean hypersurface (with $n\geqslant 2)$. We set $v_M$ its volume and $\xb:=\frac{1}{v_M}\int_MX\vol$ its center of mass.

The Hasanis-Koutroufiotis inequality is the following lower bound on the extrinsic radius $r_{M}$ of $M$ (i.e. the least radius of the Euclidean balls containing $M$)
\begin{equation}\label{rext}
 r_{M}\|\H\|_2\geqslant1.
\end{equation}
This inequality is optimal since we have equality for any Euclidean sphere. Moreover, if an immersed hypersurface $M$ satisfies the equality case then $M$ is the Euclidean sphere $S_M=\overline{X}+\frac{1}{\|\H\|_2}\S^n$  with center $\overline{X}$ and radius $\frac{1}{\|\H\|_2}$.

The Reilly inequality is the following upper bound on the first non zero eigenvalue $\lambda^M_1$ of $M$
\begin{equation}\label{lambda}
\la^M\leqslant n\|\H\|_2^2,
\end{equation}
once again we have equality if and only if $M$ is the sphere $S_M$. 

Our aim is to study the metric shape of the Euclidean hypersurfaces with almost extremal extrinsic radius or $\lambda_1$. 

\subsubsection{Almost extremal hypersurfaces weakly converge to $S_M$}

Our first result describes some volume and curvature concentration properties of almost extremal hypersurfaces that imply weak convergence to $S_M$. Note that in this result we do not assume any bound on the mean curvature. 

We set $B_x(r)$ the closed ball with center $x$ and radius $r$ in $\R^{n+1}$ and $A_\eta$ the annulus $\bigl\{X\in\R^{n+1}/\bigl|\|X-\bar{X}\|-\frac{1}{\|\H\|_2}\bigr|\leqslant\frac{\eta}{\|\H\|_2}\bigr\}$.
Throughout the paper we shall adopt the notation that $\tau(\varepsilon|n,p,h,\cdots)$ is a positive function which depends on $n,p,h,\cdots$
and which converges to zero as $\varepsilon\to 0$. These functions $\tau$ will always
be explicitly computable.

\begin{theorem}\label{Weakestim} Any immersed hypersurface $M\hookrightarrow\R^{n+1}$ with $r_{M}\|\H\|_2\leqslant 1+\varepsilon$ (or with $\frac{n\|\H\|_2^2}{\lambda_1^M}\leqslant1+\varepsilon$) satisfies
\begin{align}\label{pinchcm}
\bigl\||\H|-\|\H\|_2\bigr\|^{~}_2\leqslant 100 \sqrt[8]{\varepsilon}\|\H\|_2,
\end{align}
\begin{align}\label{estiray}
\Vol(M\setminus A_{\sqrt[8]{\varepsilon}})\leqslant 100\sqrt[8]{\varepsilon}v_M.\end{align}
Moreover, for any $r>0$ and any $x\in S_M=\overline{X}+\frac{1}{\|\H\|_2}\cdot\S^n$, we have
\begin{align}\label{passepartout}\Bigl|\frac{\Vol\bigl(B_x(\frac{r}{\|\H\|_2})\cap M\bigr)}{v_M}-\frac{\Vol\bigl(B_x(\frac{r}{\|\H\|_2})\cap S_M\bigr)}{\Vol S_M}\Bigr|\leqslant \tau(\varepsilon|n,r)\frac{\Vol\bigl(B_x(\frac{r}{\|\H\|_2})\cap S_M\bigr)}{\Vol S_M}.\end{align}
\end{theorem}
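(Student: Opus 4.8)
\textbf{Proof plan for Theorem \ref{Weakestim}.}

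The plan is to exploit the equality cases of the Reilly and Hasanis--Koutroufiotis inequalities in a quantitative way, using the Hsiung--Minkowski formula as the main algebraic tool. First I would normalise so that $\|\H\|_2=1$ and $\xb=0$, and I would recall the Hsiung--Minkowski identity $\int_M\scal{X}{\H}\vol=-v_M$ (with the appropriate sign convention), together with the elementary consequence $\int_M\scal{X}{\H}^2\vol\geqslant v_M^2/\int_M|X|^2$ obtained from Cauchy--Schwarz, and the fact that $r_M\leqslant 1+\varepsilon$ forces $|X|\leqslant 1+\varepsilon$ pointwise on $M$ up to translating the center. The combination of these gives, after expanding $\int_M|\,|X|\H-\text{(something)}|^2$ or rather $\int_M|\H-X|^2$-type integrands, a pinching of the form $\int_M\bigl||X|-1\bigr|^2\vol\leqslant\tau(\varepsilon)v_M$ and $\int_M\bigl||\H|-1\bigr|^2\vol\leqslant\tau(\varepsilon)v_M$. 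The $\lambda_1$ version is handled in parallel: the Reilly proof uses the coordinate functions $X-\xb$ as test functions, so $\lambda_1^M\leqslant1+\varepsilon$ times the equality value means $\int_M|\nabla X|^2=nv_M$ is almost $\lambda_1^M\int_M|X|^2$, and the Bochner/Reilly computation again yields the same two $L^2$-pinchings. Then \eqref{pinchcm} follows by taking square roots, with the exponent $\sqrt[8]{\varepsilon}$ coming from the chain of Cauchy--Schwarz steps (each typically costing a square root), and the constant $100$ absorbing the combinatorial factors.

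Next, for \eqref{estiray}, I would upgrade the $L^2$ control on $\bigl||X|-1\bigr|$ to a volume estimate by Chebyshev's inequality: the set where $\bigl||X|-1\bigr|>\sqrt[8]{\varepsilon}$ has volume at most $\varepsilon^{-1/4}\int_M\bigl||X|-1\bigr|^2\vol\leqslant\tau(\varepsilon)\varepsilon^{-1/4}v_M$, and one checks the bookkeeping so that the total stays below $100\sqrt[8]{\varepsilon}v_M$. Since $A_{\sqrt[8]{\varepsilon}}$ (after normalisation) is exactly $\{\,\bigl||X|-1\bigr|\leqslant\sqrt[8]{\varepsilon}\,\}$, this is precisely the complement estimate claimed. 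The main point to be careful about here is that the constants and exponents produced by the pinching in the first step have to be genuinely better than $\varepsilon^{1/4}$ so that, after the Chebyshev loss, one still lands at $\varepsilon^{1/8}$ — this is presumably why the statement carries the eighth root rather than a larger power, and it is the kind of estimate that must be tracked explicitly rather than hidden in a $\tau$.

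Finally, for \eqref{passepartout}, the idea is that \eqref{estiray} says $M$ is $L^\infty$-close to the sphere $S_M$ in a measure-theoretic sense, but one needs the finer statement that the pushforward of the normalised volume measure of $M$ onto $\R^{n+1}$ converges to the normalised volume measure of $S_M$, tested against indicator functions of balls $B_x(r/\|\H\|_2)$. I would obtain this by combining \eqref{estiray} with an argument controlling how the nearest-point projection $\pi:M\to S_M$ distorts volume: on the bulk $A_{\sqrt[8]{\varepsilon}}$ the hypersurface is a graph over (a large part of) $S_M$ with small $C^0$ height, and the first-variation/Hsiung--Minkowski machinery controls the Jacobian of $\pi$ in an averaged sense, so $\pi_*(\vol/v_M)$ differs from $\vol_{S_M}/\Vol S_M$ by $\tau(\varepsilon|n)$ in total variation on the scale of a fixed ball; evaluating on $B_x(r/\|\H\|_2)$ and allowing the error to scale with $\Vol(B_x(r/\|\H\|_2)\cap S_M)$ gives the stated relative estimate, with the dependence on $r$ entering because small balls see a smaller reference volume and hence need a finer comparison. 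The hard part of the whole argument is this last step: turning the scalar $L^2$/volume pinchings into a genuine measure-comparison uniform over all centers $x\in S_M$ and all radii $r$, since one cannot simply invoke a graph representation globally (the immersion may have several sheets, branch points, or far-away pieces accounting for the $\tau(\varepsilon)v_M$ bad set), so the comparison has to be done through integral-geometric quantities (projections onto lines through the center, or the Hsiung--Minkowski identity localised to cones) that are insensitive to the topology of the bad set.
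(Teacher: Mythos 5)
For the first two inequalities \eqref{pinchcm} and \eqref{estiray}, note that the paper does not actually reprove them here: they are quoted verbatim from Lemma \ref{estimplus}, which is cited from the companion paper \cite{AG1}. Your Hsiung--Minkowski $+$ Cauchy--Schwarz $+$ Chebyshev plan is the standard mechanism behind such pinchings and is plausible, but one bookkeeping point deserves attention: with the $L^2$ pinching stated as $\bigl\|\,|X|-1\,\bigr\|_2\leqslant C\sqrt[8]{\varepsilon}$, Chebyshev at level $\sqrt[8]{\varepsilon}$ only yields a bound of order $C^2$, not $\sqrt[8]{\varepsilon}$; one needs the $L^2$ pinching to start at a better power (e.g.\ $\varepsilon^{1/4}$ or $\varepsilon^{1/2}$) before passing to the volume estimate. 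You flag this yourself, so it is a detail to track rather than a conceptual error.

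The real divergence, and the real gap, is in the proof of \eqref{passepartout} --- the only piece actually proved in this paper (Section \ref{poipa}). Your plan is to view the bulk of $M$ as a graph over $S_M$ via nearest-point projection and to control the Jacobian of that projection in an averaged sense, patching the global failure of the graph representation by unspecified ``integral-geometric quantities'' or ``Hsiung--Minkowski localised to cones''. You correctly identify that this is the hard step, but the proposal stops at gesturing toward tools: as written there is no argument that yields a density comparison uniform over all centers $x\in S_M$ and all scales $r$, precisely because the concentration estimates \eqref{pinchcm}, \eqref{estiray} are global and do not localize by themselves. The paper takes a genuinely different, spectral route. It fixes a spherical cap $B_x(r)\cap\S^n$, approximates its (smoothed) indicator $f$ uniformly on $\S^n$ by a finite sum $\sum_{k\leqslant N}P_k$ of harmonic homogeneous polynomials, extends $f$ $0$-homogeneously, and compares $\|\varphi f\|_{L^2(M)}^2$ (with $\varphi$ the radial cutoff localizing to the annulus) to $\frac{1}{\Vol\S^n}\int_{\S^n}f^2$. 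The two key inputs are Lemma \ref{Ppresquortho2}: (i) $\|\H\|_2^{2k}\|\varphi P_k\|_{L^2(M)}^2$ is close to $\|P_k\|_{L^2(\S^n)}^2$, and (ii) $\varphi P_k$ is an almost-eigenfunction of $\Delta_M$ with eigenvalue close to $\mu_k^{S_M}$. Property (ii) is what makes the cross-terms $\int_M\varphi^2 P_kP_{k'}$, $k\neq k'$, negligible after integrating by parts against the Laplacian, so the restricted polynomials are almost $L^2$-orthogonal on $M$ just as on $\S^n$. This gives the $L^2$-norm comparison and hence the volume ratio at every center and scale, with the $\tau(\varepsilon|n,r)$ depending on $r$ through the degree $N$ of the approximating polynomial. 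This spectral/almost-orthogonality mechanism is what replaces your projection-Jacobian argument and is insensitive to multi-sheetedness, branching, or the geometry of the bad set --- exactly the obstruction you anticipated but did not resolve. If you want to complete your plan, I would encourage you to look at whether a monotonicity-formula or integral-geometric argument can truly give the uniform-in-$(x,r)$ statement; as it stands, that step is missing.
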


Note that \eqref{passepartout} implies not only that $M$ goes near any point of the sphere $S_M$, but also that the density of $M$ near each point of $S_M$ converge to $v_M/\Vol S_M$ at any scale. However, the convergence is not uniform with respect to the scales $r$. We infer that $A_{\tau(\varepsilon|n)}\cap M$ is Hausdorff close to $S_M$, which implies weak convergence to $S_M$ of almost extremal hypersurfaces.

\begin{corollary}\label{WeakHausdorff}
For any immersed hypersurface $M\hookrightarrow\R^{n+1}$ with $r_{M}\|\H\|_2\leqslant 1+\varepsilon$ (or with $\frac{n\|\H\|_2^2}{\lambda_1^M}\leqslant1+\varepsilon$) there exists a subset $A\subset M$ such that $\Vol(M\setminus A)\leqslant \tau(\varepsilon|n)v_M$ and $d_H(A,S_M)\leqslant\frac{\tau(\varepsilon|n)}{\|\H\|_2}$.
\end{corollary}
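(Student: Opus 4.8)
The plan is to deduce Corollary~\ref{WeakHausdorff} directly from Theorem~\ref{Weakestim}, so essentially no new geometry is needed: the work is in choosing the right scale and extracting a Hausdorff estimate from the density bound \eqref{passepartout} together with the volume concentration \eqref{estiray}. First I would take $A=M\cap A_{\sqrt[8]{\varepsilon}}$ (the part of $M$ lying in the annulus), so that \eqref{estiray} immediately gives $\Vol(M\setminus A)\leqslant 100\sqrt[8]{\varepsilon}\,v_M$, which is of the form $\tau(\varepsilon|n)v_M$. It then remains to show $d_H(A,S_M)\leqslant\frac{\tau(\varepsilon|n)}{\|\H\|_2}$, i.e. to bound both one-sided Hausdorff distances.

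For the inclusion $A\subset (S_M)_{\tau/\|\H\|_2}$: every point of $A$ lies in the annulus $A_{\sqrt[8]{\varepsilon}}$, whose points are within distance $\frac{\sqrt[8]{\varepsilon}}{\|\H\|_2}$ of the sphere $S_M$ (that is literally the definition of $A_\eta$ with $\eta=\sqrt[8]{\varepsilon}$). So this direction is free, with $\tau$-term $\sqrt[8]{\varepsilon}$.

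For the reverse inclusion $S_M\subset A_{\tau/\|\H\|_2}$ --- i.e. every point of the sphere is close to a point of $A$ --- I would argue by contradiction/covering. Fix $x\in S_M$ and a scale $r>0$. Inequality \eqref{passepartout} forces $\Vol\bigl(B_x(\tfrac{r}{\|\H\|_2})\cap M\bigr)\geqslant (1-\tau(\varepsilon|n,r))\frac{v_M}{\Vol S_M}\Vol\bigl(B_x(\tfrac{r}{\|\H\|_2})\cap S_M\bigr)$, and the latter spherical cap volume is a fixed positive quantity $c(n,r)v_M$ (by scaling invariance, since $S_M$ is a round sphere of radius $1/\|\H\|_2$). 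Hence for $\varepsilon$ small enough (depending on $r$ and $n$), $B_x(\tfrac{r}{\|\H\|_2})\cap M$ has volume at least $\tfrac12 c(n,r)v_M>0$; meanwhile $\Vol(M\setminus A)\leqslant 100\sqrt[8]{\varepsilon}\,v_M$, so as soon as $100\sqrt[8]{\varepsilon}<\tfrac12 c(n,r)$ the ball $B_x(\tfrac{r}{\|\H\|_2})$ must meet $A$. Thus $x$ is within $\frac{r}{\|\H\|_2}$ of $A$. Choosing $r=r(\varepsilon,n)\to 0$ slowly enough that the constraint $100\sqrt[8]{\varepsilon}<\tfrac12 c(n,r)$ still holds (possible because $c(n,r)\to 0$ only polynomially in $r$ while we have the freedom to let $r\to0$ as slowly as we wish), we get $S_M\subset A_{\tau(\varepsilon|n)/\|\H\|_2}$ with an explicit $\tau$. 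Combining the two inclusions yields $d_H(A,S_M)\leqslant\frac{\tau(\varepsilon|n)}{\|\H\|_2}$.

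The only delicate point --- and the one I would be most careful about --- is the diagonal choice of the scale $r=r(\varepsilon)$: \eqref{passepartout} is not uniform in $r$, so one cannot fix $r$ independently of $\varepsilon$, yet one must keep $r$ bounded below by something that still makes the cap volume dominate the lost volume $100\sqrt[8]{\varepsilon}v_M$. Making this quantitative requires a (routine) lower bound of the form $\Vol(B_x(\rho)\cap S_M)\geqslant c_n\rho^n$ for $\rho\leqslant \mathrm{diam}$, and then solving $c_n(r/\ldots)^n \gtrsim \sqrt[8]\varepsilon$ with the $\tau$ from \eqref{passepartout}; tracking the dependence of that $\tau$ on $r$ is what turns the final bound into an explicitly computable $\tau(\varepsilon|n)$, as promised in the paper's running convention.
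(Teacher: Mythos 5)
Your proof is correct and follows essentially the same route the paper has in mind: the text after Theorem~\ref{Weakestim} explicitly asserts that "$A_{\tau(\varepsilon|n)}\cap M$ is Hausdorff close to $S_M$'' and flags the non-uniformity of \eqref{passepartout} in $r$ as the issue, exactly as you do; the corollary is then stated without a written-out argument. Your choice $A=M\cap A_{\sqrt[8]{\varepsilon}}$, the one-sided bound from the annulus definition, the cap-density lower bound from \eqref{passepartout} for the other direction, and the diagonal choice of $r=r(\varepsilon)$ (for each $r_j=1/j$ there is $\varepsilon_j>0$ below which both $\tau(\varepsilon|n,r_j)\leqslant\frac12$ and $100\sqrt[8]{\varepsilon}<\frac12 c(n,r_j)$ hold, and one sets $r(\varepsilon)=1/j$ for $\varepsilon\in(\varepsilon_{j+1},\varepsilon_j]$) are exactly the intended details, with the one caveat that the phrase ``let $r\to0$ as slowly as we wish'' should be read as this diagonal selection rather than a free choice, since the two constraints pull in opposite directions.
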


In the case where $M$ is the boundary of a convex boby in $\R^{n+1}$ with $r_{M}\|\H\|_2\leqslant 1+\varepsilon$ (or with $\frac{n\|\H\|_2^2}{\lambda_1^M}\leqslant1+\varepsilon$), the previous result implies easily that $d_H(M,S_M)\leqslant\frac{\tau(\varepsilon|n)}{\|H\|_2}$ and even $d_L(M,S_M)\leqslant\frac{\tau(\varepsilon|n)}{\|H\|_2}$.

\subsection{Hausdorff limit-set of almost extremal hypersurfaces}

Corollary \ref{WeakHausdorff} and Theorems \ref{Main} and \ref{constrfond} (applied to $M_1=\S^n$ and $M_2$ any immersible hypersurface) allow a description of the limit-set of almost extremal hypersurfaces under a priori bounds on the mean curvature.

\begin{theorem}\label{ctrexple4}
Let $M$ be any hypersurface immersible in $\R^{n+1}$ and $T$ be a closed subset of $\R^{n+1}$, such that $\S^n\cup T$ is connected (resp. and $T\cup\S^n\subset B_0(1)$). There exists a sequence of immersions $j_i:M\hookrightarrow\R^{n+1}$ of $M$ which satisfies

1) $\lambda_1^{j_i(M)}\to\lambda_1(\S^n)$ (resp. $r_{j_i(M)}\to 1$),

2) $\|\B_i-{\rm Id}\|_p\to 1$ for any $p\in[2,n-1)$,

3) $\Vol j_i(M)\to \Vol\S^n$,

4) $j_i(M)$ converges to $\S^n\cup T$ in pointed Hausdorff distance,

5) $\Vol\S^n\|\H_i\|_{n-1}^{n-1}\to C(n) m_1(T)+\Vol\S^n$.
\end{theorem}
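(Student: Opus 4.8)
The plan is to obtain Theorem~\ref{ctrexple4} as a direct combination of the already-established Corollary~\ref{WeakHausdorff} and Theorem~\ref{constrfond}, applied with $M_1=\S^n$ and $M_2=M$ (so that $M_1\#M_2$ is diffeomorphic to $M$, since connect-summing with a sphere does not change the diffeomorphism type). The idea is that Theorem~\ref{constrfond} already supplies a sequence of immersions of $M_1\#M_2=M$ into $\R^{n+1}$ that weakly converges to $\S^n$ while converging in Hausdorff topology to $\S^n\cup T$, with prescribed asymptotics on all the relevant curvature integrals, the eigenvalues $\lambda_p$, and the volume. What remains is to convert those conclusions into the statements of Theorem~\ref{ctrexple4}: in particular, translating ``$\lambda_p(i_k(M_1\#M_2))\to\lambda_p(\S^n)$ and $\Vol\to\Vol\S^n$'' into the almost-extremality conditions ``$\lambda_1^{j_i(M)}\to\lambda_1(\S^n)$'' (resp.\ ``$r_{j_i(M)}\to 1$''), and reading off 2) and 5) from the $|\B|$ and $|\H|$ integral limits.

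First I would set $j_i:=i_k$ from Theorem~\ref{constrfond} applied to $M_1=\S^n$, $M_2=M$, and the given $T$ (which is admissible precisely because $\S^n\cup T$ is assumed connected). Conclusion 3) is then immediate from part 4) of Theorem~\ref{constrfond}, and 4) is immediate from part 1) there (pointed Hausdorff convergence being the natural notion once one keeps track of a base point, e.g.\ on the fixed copy of $\S^n$). For 1) in the $\lambda_1$ case: part 3) of Theorem~\ref{constrfond} gives $\lambda_1^{j_i(M)}\to\lambda_1(\S^n)=n$; since $\lambda_1(\S^n)=n\|\H_{\S^n}\|_2^2$ and $\|\H_i\|_2\to\|\H_{\S^n}\|_2=1$ (which follows from the convergence of the $L^\alpha$ norms of $|\H|$ for $\alpha\in[1,n-1)$, valid since $n\geqslant 2$ forces $2\in[1,n-1)$ only when $n\geqslant 3$; for $n=2$ one argues directly, the mean curvature integral conditions degenerating but the construction of Theorem~\ref{constrfond} still controlling $\Vol$ and the geometry), one gets $n\|\H_i\|_2^2/\lambda_1^{j_i(M)}\to 1$, i.e.\ the Reilly pinching $\varepsilon\to 0$. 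For the extrinsic-radius variant, the extra hypothesis $T\cup\S^n\subset B_0(1)$ together with Hausdorff convergence to $\S^n\cup T$ and $\|\H_i\|_2\to 1$ gives $r_{j_i(M)}\to 1$ and hence $r_{j_i(M)}\|\H_i\|_2\to 1$, the Hasanis--Koutroufiotis pinching.

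For 2): part 2) of Theorem~\ref{constrfond} lists $\int_{i_k}|\B|^\alpha\to\int_{\S^n}|\B|^\alpha$ for $\alpha\in[1,n-1)$; since on $\S^n$ the second fundamental form is the identity, $\int_{\S^n}|\B|^\alpha=\Vol\S^n$, and combined with $\Vol j_i(M)\to\Vol\S^n$ this yields $\|\B_i\|_\alpha^\alpha\to 1$, hence $\|\B_i-{\rm Id}\|_\alpha\to 0$ after noting $\B=\rm Id$ on the dominant part and the neck contribution to $\|\B-\rm Id\|_\alpha^\alpha$ vanishes for $\alpha<n-1$; so $\|\B_i-{\rm Id}\|_p\to 0$ for $p\in[2,n-1)$ --- here I would double-check whether the paper intends the limit to be $0$ or $1$, as the displayed ``$\to 1$'' seems to be a typo for ``$\to 0$'' (or else it is $\|\B_i\|_p\to 1$). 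Finally 5) is exactly the first curvature limit of Theorem~\ref{constrfond} with $M_1=\S^n$: $\int_{i_k}|\H|^{n-1}\to\int_{\S^n}|\H|^{n-1}+(\tfrac{n-1}{n})^{n-1}\Vol\S^{n-1}\,m_1(T)=\Vol\S^n+C(n)m_1(T)$ with $C(n)=(\tfrac{n-1}{n})^{n-1}\Vol\S^{n-1}$, and since $\Vol j_i(M)\to\Vol\S^n$ the left side equals $\Vol\S^n\|\H_i\|_{n-1}^{n-1}$ in the limit.

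The main obstacle I anticipate is not any deep new argument but the bookkeeping in low dimensions and the passage from ``convergence of $L^\alpha$ norms of $|\H|$ for $\alpha<n-1$ and of $\int|\H|^{n-1}$'' to ``convergence of $\|\H\|_2$'': one needs $2<n-1$, i.e.\ $n\geqslant 4$, for $\|\H\|_2$ to be directly controlled by part 2) of Theorem~\ref{constrfond}; for $n=2,3$ one must instead extract $\|\H_i\|_2\to 1$ from conclusion 3) of that theorem (via $\lambda_1$) or re-examine the explicit construction. A secondary, purely cosmetic, point is confirming that pointed Hausdorff convergence in 4) is what the construction delivers and that the base point can be fixed on the unchanging copy of $\S^n$; this is routine given that the construction of Theorem~\ref{constrfond} modifies $M_1\#M_2$ only near the connecting neck.
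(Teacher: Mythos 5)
Your proposal matches the paper's intended argument exactly: the paper gives no separate proof of Theorem \ref{ctrexple4}, merely noting that it follows from Theorem \ref{constrfond} applied with $M_1=\S^n$ and $M_2=M$ (so $M_1\#M_2\cong M$), the conclusions 1)--5) being read off directly from parts 1)--4) of that theorem. Your detour through $\|\H_i\|_2$ and the resulting low-dimension worry for $n=2,3$ is unnecessary, since conclusion 1) literally asks only for $\lambda_1^{j_i(M)}\to\lambda_1(\S^n)$ (resp.\ $r_{j_i(M)}\to 1$), both of which follow from parts 3) and 1) of Theorem \ref{constrfond} without ever invoking $\|\H_i\|_2$; and you are right to flag ``$\to 1$'' in conclusion 2) as a likely typo for ``$\to 0$''.
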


This result shows that we can expect no control on the topology of almost extremal hypersurfaces nor on the metric shape (even on the diameter) of the part $M\setminus A$ of Corollary \ref{WeakHausdorff} if we do not assume a strong enough upper bound on the curvature. 

On the other hand, Theorem \ref{Main} implies the following Hausdorff stability result.

\begin{theorem}\label{Hausdorff}
For any immersed hypersurface $M\hookrightarrow\R^{n+1}$ with 
$v_M\|\H\|_{n-1}^n\leqslant A$ and $r_M\|\H\|_2\leq 1+\varepsilon$ (or with $v_M\|\H\|_{n-1}^n\leqslant A$ and $\frac{n\|\H\|_2^2}{\lambda_1}\leqslant1+\varepsilon$) there exists a subset $T$ of 1-dimensional Haussdorff measure less than $C(n)\int_M|\H|^{n-1}\leqslant C(n)A\|\H\|_2^{-1}$ such that $T\cup S_M$ is connected and $d_H(M,S_M\cup T)\leqslant\tau(\varepsilon|n,A)\|\H\|_2^{-1}$.

More precisely, for any sequence $(M_k)_{k\in\N}$ of immersed hypersurfaces normalized by $\|\H_k\|_2=1$ and $\overline{X}_k=0$, which satisfies
$v_{M_k}\|\H_k\|_{n-1}^n\leqslant A$ and $r_{M_k}\to 1$ (or $v_{M_k}\|\H_k\|_{n-1}^n\leqslant A$ and $\frac{n}{\lambda_1(M_k)}\to 1$) there exists a closed subset $T\subset\R^{n+1}$ such that $m_1(T)\leqslant C(n)A$, $T\cup \S^n$ is connected and a subsequence $M_{k'}$ such that $d_H(M_{k'},\S^n\cup T)\to 0$.
\end{theorem}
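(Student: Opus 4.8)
The plan is to obtain Theorem~\ref{Hausdorff} as the synthesis of two results already at our disposal: Corollary~\ref{WeakHausdorff}, which pins down the weak Hausdorff limit of an almost extremal hypersurface as the model sphere $S_M$, and the critical case $p=m-1$ of Theorem~\ref{Main}, which bounds the one-dimensional part that a weak limit can fail to capture. I would first establish the ``more precise'' sequential statement and then deduce the quantitative $\tau$-statement from it by a routine compactness/contradiction argument.

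For the sequential statement, I would take a sequence $(M_k)$ as in the hypothesis, normalized so that $\|\H_k\|_2=1$ and $\overline{X}_k=0$; then $S_{M_k}=\S^n$ for every $k$ and the pinching hypothesis becomes $r_{M_k}\to 1$ (resp. $n/\lambda_1(M_k)\to 1$), i.e. the relevant pinching defect $\varepsilon_k$ tends to $0$. Applying Corollary~\ref{WeakHausdorff} to each $M_k$ yields subsets $A_k\subset M_k$ with $\Vol(M_k\setminus A_k)\leqslant\tau(\varepsilon_k|n)v_{M_k}$ and $d_H(A_k,\S^n)\leqslant\tau(\varepsilon_k|n)$; since $\tau(\varepsilon_k|n)\to 0$ this is exactly the statement that $(M_k)$ weakly converges to $Z=\S^n$ in Hausdorff topology. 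Next I would check the curvature hypothesis of Theorem~\ref{Main} in the critical case $m-1=n-1$: with $\|\H_k\|_2=1$ one has $\int_{M_k}|\H_k|^{n-1}=v_{M_k}\|\H_k\|_{n-1}^{n-1}$, and since $\|\H_k\|_{n-1}\geqslant\|\H_k\|_2=1$ when $n\geqslant 3$ (for $n=2$ the pinching estimate \eqref{pinchcm} forces $\|\H_k\|_1\geqslant 1-\tau(\varepsilon_k|n)$ instead), the scale-invariant bound $v_{M_k}\|\H_k\|_{n-1}^n\leqslant A$ gives $\int_{M_k}|\H_k|^{n-1}\leqslant C(n)A$ uniformly in $k$. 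Theorem~\ref{Main} then applies: the Hausdorff limit-set of $(M_k)$ is non-empty and every point of it is a closed, connected set $\S^n\cup T$ with $m_1(T)\leqslant C(n)A$, and a convergent subsequence $M_{k'}\to\S^n\cup T$ exists. Using the refined bound noted right after Theorem~\ref{Main}, $T$ can be taken with $m_1(T)\leqslant C(n)\max_\eta\liminf_{k'}\int_{M_{k'}\setminus(\S^n)_\eta}|\H_{k'}|^{n-1}$; after undoing the normalization this is $\leqslant C(n)\int_M|\H|^{n-1}\leqslant C(n)A\|\H\|_2^{-1}$, the bound asserted in the first part.

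To pass from this to the quantitative $\tau$-statement, I would argue by contradiction: if it failed there would be $\delta>0$, a value of $A$, and hypersurfaces $M_k$ with $v_{M_k}\|\H_k\|_{n-1}^n\leqslant A$ and pinching defect $\varepsilon_k\to 0$ such that $d_H(M_k,S_{M_k}\cup T)>\delta\|\H_k\|_2^{-1}$ for every closed $T$ with $T\cup S_{M_k}$ connected and $m_1(T)\leqslant C(n)\int_{M_k}|\H_k|^{n-1}$; normalizing and invoking the sequential statement on a subsequence produces an admissible $T$ with $d_H(M_{k'},\S^n\cup T)\to 0$, contradicting the assumption for $k'$ large. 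Alternatively, since the $\tau$'s furnished by Theorems~\ref{Main} and~\ref{Weakestim} are explicitly computable, composing them gives an explicit $\tau(\varepsilon|n,A)$ directly, avoiding the contradiction step.

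I expect no serious obstacle, since the analytic heart of the matter is already contained in the cited results: the identification of the weak limit with $\S^n$ comes from the volume-concentration estimate \eqref{estiray} and the scale-by-scale density estimate \eqref{passepartout} of Theorem~\ref{Weakestim}, and the control of the thin part $T$ is precisely the critical case of Theorem~\ref{Main}. The only point requiring care is the bookkeeping: converting the scale-invariant hypothesis $v_M\|\H\|_{n-1}^n\leqslant A$ into the genuine $L^{n-1}$ bound on $\H$ needed by Theorem~\ref{Main} once the scale has been fixed by $\|\H\|_2=1$ (and separately handling $n=2$ via the pinching of $|\H|$), together with tracking how the various constants $C(n)$ and functions $\tau$ combine.
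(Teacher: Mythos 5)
Your proposal is correct and follows the same route as the paper: normalize so that $\|\H\|_2=1$ and $\overline{X}=0$, use Corollary~\ref{WeakHausdorff} (which packages Inequalities~\eqref{estiray} and \eqref{passepartout}) to obtain weak Hausdorff convergence to $\S^n$, convert the scale-invariant hypothesis into a uniform $L^{n-1}$ bound on $\H$, and feed this into the geodesic-tree decomposition. Two remarks on presentation. First, the paper's own proof of the quantitative $\tau$-statement does not go through a compactness/contradiction argument: it applies Lemma~\ref{decomp} directly to each fixed $M$, taking $A$ to be the subset furnished by Corollary~\ref{WeakHausdorff}; this yields $m_1(T)\leqslant C(n)\int_M|\H|^{n-1}$ at once (and the required Hausdorff estimate since, by Michael--Simon, the scale-invariant hypothesis bounds $v_M$ above once $\|\H\|_2=1$). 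Your detour through Theorem~\ref{Main} and the ``refined bound'' remark is workable for the sequential statement, but for the fixed-$M$ statement you would still have to justify replacing the $\liminf_k\int_{M_k\setminus Z_\eta}|\H|^{n-1}$ appearing there by $\int_M|\H|^{n-1}$ for the given hypersurface, which is more naturally done by invoking Lemma~\ref{decomp} directly. Second, you rightly flag that the step $\|\H\|_{n-1}\geqslant\|\H\|_2$, used to pass from $v_M\|\H\|_{n-1}^n\leqslant A$ to $v_M\|\H\|_{n-1}^{n-1}\leqslant A$, only holds for $n\geqslant 3$; the paper's one-line proof (``$\|\H\|_2=1\leqslant\|\H\|_p$'') does gloss over $n=2$, and your observation that the pinching estimate \eqref{pinchcm} supplies the missing lower bound $\|\H\|_1\geqslant 1-\tau(\varepsilon|n)$ in that case is exactly the right fix.
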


Here also the constant $C(n)$ of this theorem is not the same as in Theorem \ref{ctrexple4}. So we do not have an exact computation of the Hausdorff limit set in the case $p=n-1$ but we conjecture that it is just a mater of non optimality of the constant $C(m)$ in the bound on $m_1(T)$ in Theorem \ref{Main}.

Finally, as a direct consequence of Theorem \ref{Main}, we get the following result.

\begin{theorem}\label{HausdorffCritic}
Let $2\leqslant n-1<p\leqslant+\infty$. Any immersed hypersurface $M\hookrightarrow\R^{n+1}$ with 
$v_M\|\H\|_p^n\leqslant A$ and $r_M\|\H\|_2\leq 1+\varepsilon$ (or with $v_M\|\H\|_p^n\leqslant A$ and $\frac{n\|\H\|_2^2}{\lambda_1}\leqslant1+\varepsilon$) satisfies $d_H(M,S_M)\leqslant\tau(\varepsilon|n,p,A)\|\H\|_2^{-1}$.
\end{theorem}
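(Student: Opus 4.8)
The plan is to deduce the statement from Corollary~\ref{WeakHausdorff} together with the first (subcritical) part of Theorem~\ref{Main}, with essentially no new work beyond matching the hypotheses after a normalization.

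First I would record that every quantity in sight is invariant under homotheties and translations of $\R^{n+1}$: $v_M\|\H\|_p^n$, $r_M\|\H\|_2$, $n\|\H\|_2^2/\lambda_1^M$ and $d_H(M,S_M)\|\H\|_2$ are all scale- and translation-invariant, while $S_M$ is carried to $\overline X+\|\H\|_2^{-1}\S^n$. Arguing by contradiction in sequence form, it then suffices to prove: if $(M_k)_k$ is a sequence of closed immersed hypersurfaces normalized by $\|\H_k\|_2=1$ and $\overline X_k=0$ (so that $S_{M_k}=\S^n$), with $v_{M_k}\|\H_k\|_p^n\leqslant A$ and $r_{M_k}\to1$ (resp. $n/\lambda_1^{M_k}\to1$), then $d_H(M_k,\S^n)\to0$. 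Indeed, were the theorem false, there would exist $\delta>0$, a sequence $\varepsilon_k\to0$ and hypersurfaces $M_k$ verifying the hypotheses with $\varepsilon=\varepsilon_k$ but $d_H(M_k,S_{M_k})\|\H_k\|_2\geqslant\delta$, and after rescaling and translating these would contradict the claim just stated. (If an explicitly computable $\tau$ is wanted instead, one simply composes the explicit estimate of Corollary~\ref{WeakHausdorff} with the quantitative content underlying the proof of Theorem~\ref{Main}.)

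Next I would fix such a normalized sequence and check the hypothesis of Theorem~\ref{Main}. Since the $L^p$-norms are normalized ($\|f\|_p^p=v_M^{-1}\int_M|f|^p\vol$), the map $p\mapsto\|\H_k\|_p$ is nondecreasing, so $\|\H_k\|_p\geqslant\|\H_k\|_2=1$ whenever $p\geqslant2$; as $p>n-1\geqslant2$, this gives
\[
\Vol(M_k)\,\|\H_k\|_p^{\,n-1}=\frac{v_{M_k}\|\H_k\|_p^n}{\|\H_k\|_p}\leqslant A
\]
for every $k$ (the case $p=+\infty$ is handled the same way, or by first replacing $\infty$ by any finite exponent $>n-1$ via $\|\H_k\|_\infty\geqslant\|\H_k\|_q$). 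On the other hand $r_{M_k}\|\H_k\|_2\to1$ (resp. $n\|\H_k\|_2^2/\lambda_1^{M_k}\to1$), so Corollary~\ref{WeakHausdorff} yields subsets $A_k\subset M_k$ with $\Vol(M_k\setminus A_k)/\Vol(M_k)\to0$ and $d_H(A_k,S_{M_k})=d_H(A_k,\S^n)\to0$; that is, $(M_k)_k$ converges weakly to $Z=\S^n$ in Hausdorff topology. Then the first part of Theorem~\ref{Main}, applied with $m=n$ and the given exponent $p>m-1$, upgrades weak convergence to $d_H(M_k,\S^n)\to0$, which is exactly the claim.

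The only point requiring a little care — and the nearest thing to an obstacle — is the exponent bookkeeping: converting the scale-invariant hypothesis $v_M\|\H\|_p^n\leqslant A$ into the bound $\Vol(M)\|\H\|_p^{n-1}\leqslant A$ demanded by Theorem~\ref{Main}. This is precisely why the normalization $\|\H\|_2=1$ (equivalently $S_M=\|\H\|_2^{-1}\S^n+\overline X$) and the monotonicity of the normalized $L^p$-norms are invoked; everything else is a direct citation of Corollary~\ref{WeakHausdorff} and Theorem~\ref{Main}, and I do not expect any genuine difficulty.
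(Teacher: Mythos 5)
Your argument is correct and matches the paper's proof in substance: the authors likewise normalize $\|\H\|_2=1$, $\overline X=0$ (so $S_M=\S^n$), observe that $v_M\|\H\|_p^{n-1}\leqslant v_M\|\H\|_p^n\leqslant A$ since $\|\H\|_p\geqslant\|\H\|_2=1$, and then invoke the volume-concentration estimate \eqref{estiray} together with Lemma~\ref{decomp} — which is precisely the machinery that Corollary~\ref{WeakHausdorff} and the subcritical ($p>m-1$) case of Theorem~\ref{Main} package up. Your contradiction/sequence phrasing forfeits the explicit computability of $\tau$ that the paper's conventions ask for, but you note this yourself and the remedy (tracking the quantitative bounds through Corollary~\ref{WeakHausdorff} and the proof of Theorem~\ref{Main}) is routine.
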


Theorem \ref{HausdorffCritic} was already proved in the case $p=+\infty$ and under the stronger assumption $(1+\varepsilon)\lambda_1\geqslant n\|\H\|_4^2$  in \cite{colgros}, and in the case $p=+\infty$ and under the stronger assumption $r_M\|\H\|_4\leq 1+\varepsilon$ in \cite{roth}. It is also proved in an unpublished previous version of this paper \cite{AGR1} in the case $p>n$. In all these papers, the Hausdorff convergence is obtained by first proving that $\|X\|$ is almost constant in $L^2$ norm and then by applying a Moser iteration technique to infer that $\|X\|$ is almost constant is $L^\infty$-norm. However, this scheme of proof cannot be applied to get the optimal condition $p>n-1$ since $p=n$ is the critical exponent for the iteration.
In place of a Moser iteration scheme, we adapt a technique introduced by P.Topping \cite{Top} to control the diameter of $M$ by $\int_M|\H|^{n-1}\, dv$.

Note that by Theorem \ref{Hausdorff}, in the case $v_M\|\H\|^n_p\leqslant A$ with $p>n-1$, almost extremal hypersurfaces for the Reilly inequality are almost extremal hypersurfaces for the Hasanis-Koutroufiotis inequality. Actually, in that case, an hypersurface is Hausdorff close to a sphere if and only if it is almost extremal for the Hasanis-Koutroufiotis inequality. In \cite{AG1}, we prove that an hypersurface Hausdorff close to a sphere or almost extremal for the Hasanis-Koutroufiotis inequality is not necessarily almost extremal for the Reilly inequality, even under the assumption $v_M\|\H\|_p^n\leqslant A$, for any $p<n$. \medskip

The structure of the paper is as follows: in Section \ref{concentration}, we recall some concentration properties for the volume and the mean curvature of almost extremal hypersurfaces (in particular Inequalities \eqref{estiray} and \eqref{pinchcm}) and some estimates on the restrictions to hypersurfaces of the homogeneous, harmonic polynomials of $\R^{n+1}$, proved in \cite{AG1}. They are used in Section \ref{poipa} to prove Inequality \eqref{passepartout}. Theorem \ref{Main} is proved in Section \ref{Topping}. We end the paper in section \ref{se} by the proof of Theorem \ref{constrfond}.

 Throughout the paper we adopt the notation that $C(n,k,p,\cdots)$ is function greater than $1$ which depends on $p$, $q$, $n$, $\cdots$. It eases the exposition to disregard the explicit nature of these functions. The convenience of this notation is that even though $C$ might change from line to line in a calculation it still maintains these basic features.
\bigskip

\noindent\underline{Acknowledgments}: 
 We thank C.Ann\'{e} and P.Jammes for very fruitful discussions on Theorem \ref{constrfond}. Part of this work was done while E.A was invited at the MSI, ANU Canberra, funded by the PICS-CNRS Progress in Geometric Analysis and Applications. E.A. thanks P.Delanoe, J.Clutterbuck and J.X. Wang for giving him this opportunity and for stimulating discussions on that work.

%%%%%%%%%%%%%%%%%%%%%%%%%%%%%%%%%%%%%%%%%%%%%%%%%%%%%%%%%%%%%%%%%%%%%%%%%%%%%%%%%%%%%%%%%%%%%%%%%%%%%%%%%%%%%%%%%%%%%%%%%%%%%%%%%%%%%%%%%%%%%%%%%%%%%%%%%%%%%%%%%%%%%%%%%%%%%%%%%%%%%%%%%%%%%%%%%%%%%%%%%%%%%%%%%%%%%%%%%%%%%%%%%%%%%%%%%%%%%%%%%%%%%%%%%%%%%%%%%%%%%%%%%%

\section{Some estimates on almost extremal hypersurfaces}\label{concentration}

We recall some estimate on almost extremal hypersurfaces proved in \cite{AG1}.
From now on, we assume, without loss of generality, that $\bar{X}=0$. We say that $M$ satisfies the pinching $(P_{p,\varepsilon})$ when $\|\H\|_p\|X-\overline{X}\|_2\leqslant 1+\varepsilon$. Let $X^T(x)$ denote the orthogonal projection of $X(x)$ on the tangent space $T_xM$.

\begin{lemma}[\cite{AG1}]\label{banalite} If $(P_{2,\varepsilon})$  holds, then we have $\|X^T\|_2\leqslant\sqrt{3\varepsilon}\|X\|_2$ and $\|X-\frac{\H}{\|\H\|_2^2}\nu\|_2\leqslant\sqrt{3\varepsilon}\|X\|_2$.
\end{lemma}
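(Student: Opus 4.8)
The plan is to reduce both inequalities to the single Minkowski-type identity $\frac{1}{v_M}\int_M \H\,\langle X,\nu\rangle\, dv = 1$ together with the pinching hypothesis $(P_{2,\varepsilon})$, which (since $\bar X=0$ has been arranged) reads $\|\H\|_2^2\,\|X\|_2^2\leqslant(1+\varepsilon)^2$. To get the identity I would use the Gauss--Weingarten computation $\div X^T=\sum_i\langle\nabla_{e_i}X^T,e_i\rangle=n-\langle X,\nu\rangle\,\tr\B=n-n\H\langle X,\nu\rangle$ (equivalently, the Beltrami formula $\Delta X=n\H\nu$ together with the divergence theorem applied coordinatewise), and then integrate over the closed manifold $M$ so that the divergence term drops. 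Incidentally this already forces $\|\H\|_2\neq 0$ (otherwise $\div X^T\equiv n$ would integrate to $n v_M\neq 0$), while $\|X\|_2\neq 0$ is automatic, so all the ratios below are meaningful.

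Granting the identity, the two estimates are immediate. Expanding the square and using $\frac{1}{v_M}\int_M\H^2\, dv=\|\H\|_2^2$ and the identity,
\begin{equation*}
\Bigl\|X-\tfrac{\H}{\|\H\|_2^2}\nu\Bigr\|_2^2=\|X\|_2^2-\frac{2}{\|\H\|_2^2}+\frac{1}{\|\H\|_2^2}=\|X\|_2^2-\frac{1}{\|\H\|_2^2},
\end{equation*}
and the pinching in the form $1/\|\H\|_2^2\geqslant\|X\|_2^2/(1+\varepsilon)^2$ bounds this by $\|X\|_2^2\bigl(1-(1+\varepsilon)^{-2}\bigr)\leqslant 3\varepsilon\,\|X\|_2^2$ for $\varepsilon\leqslant 1$; this is the second assertion. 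For the first, I would just note that $\frac{\H}{\|\H\|_2^2}\nu$ is normal to $M$ and $X^T$ tangent, so $X-\frac{\H}{\|\H\|_2^2}\nu=X^T+\bigl(\langle X,\nu\rangle-\frac{\H}{\|\H\|_2^2}\bigr)\nu$ is a pointwise orthogonal splitting; hence $|X^T|\leqslant\bigl|X-\frac{\H}{\|\H\|_2^2}\nu\bigr|$ at each point and $\|X^T\|_2\leqslant\sqrt{3\varepsilon}\,\|X\|_2$ follows by integration. (Alternatively, Cauchy--Schwarz applied to the identity gives $\frac{1}{v_M}\int_M\langle X,\nu\rangle^2\, dv\geqslant 1/\|\H\|_2^2\geqslant\|X\|_2^2/(1+\varepsilon)^2$, and one concludes from $\|X^T\|_2^2=\|X\|_2^2-\frac{1}{v_M}\int_M\langle X,\nu\rangle^2\, dv$.)

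The only genuinely delicate point is producing the Minkowski identity with the exact constant $1$ --- neither $-1$ nor with a misplaced factor $n$ --- which is purely a matter of fixing consistently the sign of $\B$ relative to the chosen unit normal; the round sphere, on which both inequalities become equalities (precisely the equality case of the Hasanis--Koutroufiotis inequality), is the natural sanity check. Everything else is elementary algebra and Cauchy--Schwarz, so I anticipate no real obstacle.
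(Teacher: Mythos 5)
The paper does not actually prove this lemma: it cites it from the companion paper \cite{AG1}, so there is no in-paper argument to compare against. Your proof, however, is correct and self-contained, and it is essentially the only reasonable route: the pinching hypothesis $\|\H\|_2\|X\|_2\leqslant 1+\varepsilon$ (with $\bar X=0$) is turned into the estimate $\|X\|_2^2-\tfrac{1}{\|\H\|_2^2}\leqslant(1-(1+\varepsilon)^{-2})\|X\|_2^2\leqslant 3\varepsilon\|X\|_2^2$, and the Hsiung--Minkowski identity $\tfrac{1}{v_M}\int_M\H\langle X,\nu\rangle\,dv=1$ makes both $\|X-\tfrac{\H}{\|\H\|_2^2}\nu\|_2^2$ and (via Cauchy--Schwarz) $\|X\|_2^2-\|\langle X,\nu\rangle\|_2^2=\|X^T\|_2^2$ collapse to exactly that quantity. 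Your sign discussion is the right thing to flag: with the paper's normalisation (the equality sphere is $\bar X+\tfrac{1}{\|\H\|_2}\S^n$, so $\H\nu$ must equal $X/R^2$ on a round sphere centred at the origin), the identity indeed comes out as $+1$, and the expression $X-\tfrac{\H}{\|\H\|_2^2}\nu$ vanishes there, as it must. Two tiny remarks: the pointwise orthogonal-splitting argument for the first inequality and the Cauchy--Schwarz argument you give in parentheses are of course the same thing, so either suffices; and the restriction $\varepsilon\leqslant 1$ is not actually needed since $1-(1+\varepsilon)^{-2}\leqslant 2\varepsilon$ by concavity, so the stated constant $\sqrt{3\varepsilon}$ holds for all $\varepsilon>0$.
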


We set $A_\eta=B_0(\frac{1+\eta}{\|\H\|_2})\setminus B_0(\frac{1-\eta}{\|\H\|_2})$.

\begin{lemma}[\cite{AG1}]\label{estimplus}
If $(P_{p,\varepsilon})$ (for $p>2$), or $ n\|\H\|_2^2/\lambda_1^M\leqslant1+\varepsilon$, or $r_{M}\|\H\|_2\leqslant1+\varepsilon$ holds (with $\varepsilon\leqslant\frac{1}{100}$), then we have
$\bigl\|\|X\|-\frac{1}{\|\H\|_2}\bigr\|_2\leqslant\frac{C}{\|\H\|_2}\sqrt[8]{\varepsilon}$, $\||\H|-\|\H\|_2\|_2\leq C\sqrt[8]{\varepsilon} \|\H\|_2$
and
$\Vol (M\setminus A_{\sqrt[8]{\varepsilon}})\leq C\sqrt[8]{\varepsilon}v_M$,
where $C=6\times2^\frac{2p}{p-2}$ in the case ($P_{p,\varepsilon}$) and $C=100$ in the other cases. 
\end{lemma}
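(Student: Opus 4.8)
Since Lemma~\ref{estimplus} is quoted from \cite{AG1}, I only outline a strategy. The plan is to reduce the three hypotheses to the single $L^2$-pinching $(P_{2,\varepsilon})$, and then to upgrade the $L^2$-closeness of $X$ to a normal field (which is Lemma~\ref{banalite}) into $L^2$-closeness of $\|X\|$ — equivalently, via $\|X\|\approx|\H|/\|\H\|_2^2$, of $|\H|$ — to a constant. For the reduction I would observe that $\overline X=0$ minimises $c\mapsto\|X-c\|_2$, so $\|X\|_2\leqslant r_M$ and $r_M\|\H\|_2\leqslant1+\varepsilon$ yields $(P_{2,\varepsilon})$; that the coordinate functions of $X$ have zero mean (as $\overline X=0$) and satisfy $\sum_i|\nabla_M X_i|^2\equiv n$, so testing $\lambda_1^M$ against them gives the bound $\lambda_1^M\|X\|_2^2\leqslant n$ underlying the Reilly inequality, whence $n\|\H\|_2^2\leqslant(1+\varepsilon)\lambda_1^M$ also gives $(P_{2,\varepsilon})$; and that $\|\H\|_2\leqslant\|\H\|_p$ turns $(P_{p,\varepsilon})$ into $(P_{2,\varepsilon})$ (the larger constant $6\times2^{2p/(p-2)}$ in the $L^p$ case, blowing up as $p\to2$, should come from an interpolation step needed to re-express the conclusions in $\|\H\|_2$-normalised form; the two other cases keep a universal constant). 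From there I would work under $(P_{2,\varepsilon})$ and use Lemma~\ref{banalite} as a black box: $\|X^T\|_2\leqslant\sqrt{3\varepsilon}\,\|X\|_2$ and $\bigl\|X-\tfrac{\H}{\|\H\|_2^2}\nu\bigr\|_2\leqslant\sqrt{3\varepsilon}\,\|X\|_2$.

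The core of the argument — and, I expect, the main difficulty — is to show $\bigl\|\,\|X\|^2-\|X\|_2^2\,\bigr\|_2\leqslant\tau(\varepsilon|n)\,\|X\|_2^2$. Had we a spectral gap $\lambda_1^M\gtrsim\|\H\|_2^2$ this would be immediate, since $f:=\|X\|^2-\|X\|_2^2$ is mean-zero with Dirichlet energy $4\|X^T\|_2^2 v_M\lesssim\varepsilon\,v_M\|X\|_2^2$; but no such lower bound on $\lambda_1^M$ is available in the $r_M$- or $(P_{p,\varepsilon})$-cases, because almost-extremality for the Hasanis--Koutroufiotis inequality does not imply almost-extremality for the Reilly inequality without a curvature bound. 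Instead I would feed $f=\|X\|^2$ into $\int_M f\,\Delta_M f=-\int_M|\nabla_M f|^2$ — using $\nabla_M f=2X^T$, $\Delta_M f=2n-2n\H\langle X,\nu\rangle$ and the Hsiung--Minkowski formula $\int_M\H\langle X,\nu\rangle=v_M$ — to get $\int_M\|X\|^2\H\langle X,\nu\rangle=v_M\|X\|_2^2+\tfrac2n\|X^T\|_2^2 v_M$, and then run the analogous computation for the restrictions to $M$ of the degree-$2$ homogeneous harmonic polynomials of $\R^{n+1}$ (equivalently, averaging $f_a=\langle a,X\rangle^2$ over $a\in\S^n$), invoking the estimates on such restrictions proved in \cite{AG1}. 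Inserting $\langle X,\nu\rangle^2=\|X\|^2-|X^T|^2$ and the approximations $\H\langle X,\nu\rangle\approx|\H|\,\|X\|\approx\H^2/\|\H\|_2^2$ of Lemma~\ref{banalite}, the correction terms in the resulting integral identity all lie within $\tau(\varepsilon)\,v_M\|X\|_2^4$ of their values on the round sphere, forcing $\int_M\|X\|^4\leqslant(1+\tau(\varepsilon))\,v_M\|X\|_2^4$, which is the claim since $\int_M\|X\|^4-v_M\|X\|_2^4=v_M\bigl\|\,\|X\|^2-\|X\|_2^2\,\bigr\|_2^2$. It is precisely this chain of Cauchy--Schwarz-type bounds, substituting for a spectral gap, that degrades the exponent from the $\sqrt\varepsilon$ of Lemma~\ref{banalite} down to the $\sqrt[8]{\varepsilon}$ appearing in the statement.

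To conclude, from $\bigl|\,\|X\|-\|X\|_2\,\bigr|\leqslant\|X\|_2^{-1}\bigl|\,\|X\|^2-\|X\|_2^2\,\bigr|$, the previous estimate, and $\|\H\|_2^{-1}\leqslant\|X\|_2\leqslant(1+\varepsilon)\|\H\|_2^{-1}$ (Hasanis--Koutroufiotis and $(P_{2,\varepsilon})$), I obtain $\bigl\|\,\|X\|-\|\H\|_2^{-1}\,\bigr\|_2\leqslant C\|\H\|_2^{-1}\sqrt[8]{\varepsilon}$. The curvature estimate then follows from $\bigl\|\,\|X\|-\tfrac{|\H|}{\|\H\|_2^2}\,\bigr\|_2\leqslant\bigl\|X-\tfrac{\H}{\|\H\|_2^2}\nu\bigr\|_2\leqslant\sqrt{3\varepsilon}\,\|X\|_2$ and the triangle inequality, after multiplying through by $\|\H\|_2^2$. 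Finally, since $M\setminus A_{\sqrt[8]{\varepsilon}}=\bigl\{\,\bigl|\,\|X\|-\|\H\|_2^{-1}\,\bigr|>\sqrt[8]{\varepsilon}\,\|\H\|_2^{-1}\bigr\}$, Chebyshev's inequality applied to the first estimate gives $\Vol(M\setminus A_{\sqrt[8]{\varepsilon}})\leqslant C\sqrt[8]{\varepsilon}\,v_M$ (with, if needed, a harmless adjustment of the non-optimal exponent $\tfrac18$ that the three conclusions share).
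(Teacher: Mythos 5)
The paper does not prove this lemma; it is imported verbatim from \cite{AG1}, so there is no in-text proof to compare against. Judging your strategy on its own merits, the main flaw is the attempt to collapse the three hypotheses to $(P_{2,\varepsilon})$ and then run a single argument from there. The lemma is carefully stated for $p>2$, with a constant $6\cdot2^{2p/(p-2)}$ that blows up as $p\to2$: this is exactly because $(P_{2,\varepsilon})$ alone does \emph{not} yield the conclusions, and each hypothesis must be exploited beyond the fact that it implies $(P_{2,\varepsilon})$. Concretely: (i) in the Reilly case you dismiss the spectral-gap route as unavailable, but the hypothesis $n\|\H\|_2^2/\lambda_1\leqslant1+\varepsilon$ \emph{is} a lower bound $\lambda_1\geqslant n\|\H\|_2^2/(1+\varepsilon)$, and applying it to the mean-zero function $f=\|X\|^2-\|X\|_2^2$ with $|\nabla f|^2=4|X^T|^2$ and Lemma~\ref{banalite} gives $\|f\|_2\leqslant C\sqrt{\varepsilon}\|X\|_2^2$ at once; (ii) in the extrinsic-radius case the extra information is the \emph{pointwise} bound $\|X\|\leqslant r_M\leqslant(1+\varepsilon)/\|\H\|_2\leqslant(1+\varepsilon)\|X\|_2$ (using $\|X\|_2\|\H\|_2\geqslant1$ from the proof of \eqref{rext}), which together with $\tfrac1{v_M}\int\|X\|^2=\|X\|_2^2$ forces $\tfrac1{v_M}\int\|X\|\geqslant\|X\|_2/(1+\varepsilon)$, whence $\|\,\|X\|-\|X\|_2\|_2^2=2\|X\|_2(\|X\|_2-\tfrac1{v_M}\int\|X\|)\leqslant2\varepsilon\|X\|_2^2$; (iii) in the $(P_{p,\varepsilon})$ case the mechanism is norm interpolation: $\|\H\|_2\leqslant\|\H\|_p\leqslant(1+\varepsilon)\|\H\|_2$ and $\|\H\|_2\leqslant\|\H\|_1^{\theta}\|\H\|_p^{1-\theta}$ with $\theta=\tfrac{p-2}{2(p-1)}$ yield $\|\H\|_1\geqslant(1+\varepsilon)^{-p/(p-2)}\|\H\|_2$, and $\||\H|-\|\H\|_2\|_2^2=2\|\H\|_2(\|\H\|_2-\|\H\|_1)$ finishes; this is exactly where the constant $2^{2p/(p-2)}$ arises, not from re-expressing in a $\|\H\|_2$-normalised form. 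Your proposed core step via Hsiung--Minkowski and the harmonic-polynomial estimates of Lemma~\ref{Ppresquortho2} would, even if correct, be circular (Lemma~\ref{Ppresquortho2} is a downstream consequence of Lemma~\ref{estimplus}) and moreover does not cover the $(P_{p,\varepsilon})$ case at all, since Lemma~\ref{Ppresquortho2} is stated only under the $r_M$- or $\lambda_1$-pinching.

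There is also a numerological problem at the end. If the $L^2$ bound were only $\|\,\|X\|-\|\H\|_2^{-1}\|_2\leqslant C\sqrt[8]{\varepsilon}\,\|\H\|_2^{-1}$, then Chebyshev on the set $\{|\,\|X\|-\|\H\|_2^{-1}|>\sqrt[8]{\varepsilon}\,\|\H\|_2^{-1}\}$ gives a bound of the form $C^2\varepsilon^{1/4}/\varepsilon^{1/4}\cdot v_M=C^2v_M$, which is vacuous; one needs the genuine $\sqrt{\varepsilon}$-order $L^2$ estimate (as obtained above) before Chebyshev produces $\Vol(M\setminus A_{\sqrt[8]{\varepsilon}})\leqslant C\varepsilon^{3/4}v_M\leqslant C\sqrt[8]{\varepsilon}\,v_M$. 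The $\sqrt[8]{\varepsilon}$ in the stated $L^2$ inequalities is a deliberately weakened, uniform exponent, not the exponent one should carry into the Chebyshev step.
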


We set $\mathcal{H}^k(M)$ the set of functions $\{P_{|M}\}$, where $P$ is any harmonic, homogeneous polynomials of degree $k$ of $\R^{n+1}$. We also set $\psi{:}[0,\infty)\rightarrow [0,1]$ a smooth function, which is $0$ outside $[\frac{(1-2\sqrt[16]{\varepsilon})^2}{\|\H\|_2^2},\frac{(1+2\sqrt[16]{\varepsilon})^2}{\|\H\|_2^2}]$ and $1$ on $[\frac{(1-\sqrt[16]{\varepsilon})^2}{\|\H\|_2^2},\frac{(1+\sqrt[16]{\varepsilon})^2}{\|\H\|_2^2}]$, and  $\varphi$ the function on $M$ defined by $\varphi(x)=\psi(|X_x|^2)$.

\begin{lemma}[\cite{AG1}]\label{Ppresquortho2}
For any hypersurface $M\hookrightarrow\R^{n+1}$ isometrically immersed with $r_{M}\|\H\|_2\leqslant1+\varepsilon$ (or $\frac{ n\|\H\|_2^2}{\lambda_1}\leqslant1+\varepsilon$) and for any $P\in\hkm$, we have
$$\bigl|\|\H\|_2^{2k}\normdfonc{\varphi P}^2-\normdpoly{P}^2\bigr|\leqslant C\sqrt[32]{\varepsilon}\normdpoly{P}^2,$$
where $C=C(n,k)$.

If moreover $\varepsilon\leqslant\frac{1}{(2C)^{32}}$, then we have $\bigl\|\Delta(\varphi P)-\mu_k^{S_M}\varphi P\bigr\|_2\leqslant C\sqrt[16]{\varepsilon}\mu_k^{S_M}\|\varphi P\|_2$.
\end{lemma}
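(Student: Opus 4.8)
The plan is to deduce both assertions from the three quantitative facts already at our disposal: the $L^2$-concentration of $\|X\|$ near $1/\|\H\|_2$ and of $|\H|$ near $\|\H\|_2$ (Lemma \ref{estimplus}), the almost-radiality $\|X^T\|_2\leqslant\sqrt{3\varepsilon}\|X\|_2$ with $\|X-\frac{\H}{\|\H\|_2^2}\nu\|_2\leqslant\sqrt{3\varepsilon}\|X\|_2$ (Lemma \ref{banalite}), and the explicit knowledge of the eigenfunctions/eigenvalues of the round sphere $S_M$. The cutoff $\varphi$ localizes everything to the annulus $A_{2\sqrt[16]{\varepsilon}}$, on which $\|X\|$ is pinched to within $2\sqrt[16]{\varepsilon}$ of $1/\|\H\|_2$, and outside of which $\Vol$ is of order $\sqrt[16]{\varepsilon}v_M$ by Lemma \ref{estimplus}.

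For the first estimate, I would compare $\|\H\|_2^{2k}\normdfonc{\varphi P}^2=\frac{1}{v_M}\int_M\varphi^2\|\H\|_2^{2k}|P(X)|^2\,\vol$ with $\normdpoly{P}^2=\frac{1}{\Vol\S^n}\int_{\S^n}|P|^2$. Since $P$ is homogeneous of degree $k$, $\|\H\|_2^{k}P(X)=P(\|\H\|_2 X)$, and on the support of $\varphi$ the point $\|\H\|_2 X$ lies within $2\sqrt[16]{\varepsilon}$ of the unit sphere; projecting radially onto $\S^n$ and using that $P$ restricted to a neighbourhood of $\S^n$ is Lipschitz (with constant $C(n,k)$) gives $\bigl||\|\H\|_2^k P(X)|^2-|P(\|\H\|_2 X/|\|\H\|_2 X|)|^2\bigr|\leqslant C(n,k)\sqrt[16]{\varepsilon}\|P\|_{\S^n}^2$ pointwise on $\mathrm{supp}\,\varphi$. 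It then remains to compare the pushforward of $\varphi^2\,\vol/v_M$ under the radial projection with the normalized measure on $\S^n$: this is exactly where \eqref{passepartout}-type density control enters, and here it is cheaper because we only need the total mass on $\S^n$ of the pushed-forward measure to be $1+\tau(\varepsilon)$, which follows from $\Vol(M\setminus A_{\sqrt[16]{\varepsilon}})\leqslant C\sqrt[16]{\varepsilon}v_M$ together with Lemma \ref{banalite} controlling the deviation of the Gauss map / the Jacobian of the radial projection (the normal $\nu$ is $\sqrt{\varepsilon}$-close to $X/\|X\|$ in $L^2$, so the radial projection is an $L^2$-almost-isometry on $\mathrm{supp}\,\varphi$). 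Collecting the error terms and taking $\sqrt[32]{\varepsilon}$ as the governing small quantity yields the claim.

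For the second estimate, write $\Delta(\varphi P)=\varphi\Delta P+2\scal{\nabla\varphi}{\nabla P}+P\Delta\varphi$ on $M$. The main term is $\varphi\Delta P$; on a Euclidean hypersurface the intrinsic Laplacian of the restriction of a harmonic polynomial $P$ satisfies $\Delta(P|_M)=-\mathrm{Hess}\,P(\nu,\nu)+m\H\,\partial_\nu P$ (the trace identity for the ambient Hessian minus the normal direction, plus the mean-curvature term), and on the round sphere $S_M$ the same formula produces precisely $\mu_k^{S_M}P$. Replacing $\nu$ by $X/\|X\|$ (error $\sqrt\varepsilon$ in $L^2$ by Lemma \ref{banalite}), $\|X\|$ by $1/\|\H\|_2$ and $|\H|$ by $\|\H\|_2$ (errors $\sqrt[8]{\varepsilon}$ in $L^2$ by Lemma \ref{estimplus}), and using that $\mathrm{Hess}\,P$ and $\partial_\nu P$ are again controlled by $C(n,k)\|P\|_{\S^n}$ on $\mathrm{supp}\,\varphi$, one gets $\|\varphi\Delta P-\mu_k^{S_M}\varphi P\|_2\leqslant C\sqrt[8]{\varepsilon}\mu_k^{S_M}\|P\|_{\S^n}$, which by the first estimate is $\leqslant C\sqrt[8]{\varepsilon}\mu_k^{S_M}\|\varphi P\|_2$. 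The cross term $2\scal{\nabla\varphi}{\nabla P}$ and the term $P\Delta\varphi$ are supported in the thin shell $A_{2\sqrt[16]{\varepsilon}}\setminus A_{\sqrt[16]{\varepsilon}}$; there $|\nabla\varphi|$ and $|\Delta\varphi|$ are bounded by $C\|\H\|_2/\sqrt[16]{\varepsilon}$ and $C\|\H\|_2^2/\sqrt[8]{\varepsilon}$ respectively (from the construction of $\psi$, whose argument ranges over an interval of length $\sim\sqrt[16]{\varepsilon}/\|\H\|_2^2$), while the shell has volume $\leqslant C\sqrt[16]{\varepsilon}v_M$ by Lemma \ref{estimplus}, so these terms contribute $L^2$-norm $\leqslant C\sqrt[16]{\varepsilon}^{\,1/2-1}\cdots$ — and here is the one point requiring care: naively the $1/\sqrt[16]{\varepsilon}$ from $\nabla\varphi$ against $\sqrt{\Vol(\text{shell})/v_M}\sim\sqrt[32]{\varepsilon}$ is not small.

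The main obstacle, therefore, is exactly this estimate of the cutoff-derivative terms, and the resolution is that one should not bound $\Vol(\text{shell})$ by the crude $C\sqrt[16]{\varepsilon}v_M$ but rather exploit the stronger Chebyshev bound coming from $\bigl\|\|X\|-\frac1{\|\H\|_2}\bigr\|_2\leqslant\frac{C}{\|\H\|_2}\sqrt[8]{\varepsilon}$: the shell $A_{2\sqrt[16]{\varepsilon}}\setminus A_{\sqrt[16]{\varepsilon}}$ consists of points where $\bigl|\|X\|-\frac1{\|\H\|_2}\bigr|\geqslant\frac{\sqrt[16]{\varepsilon}}{\|\H\|_2}$, so its relative volume is $\leqslant\bigl(\sqrt[16]{\varepsilon}\bigr)^{-2}\bigl\|\|X\|-\frac1{\|\H\|_2}\bigr\|_2^2\|\H\|_2^2\leqslant C(\sqrt[8]{\varepsilon})^2/(\sqrt[16]{\varepsilon})^2=C\sqrt[8]{\varepsilon}$. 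Combining this with $|\nabla\varphi|^2\leqslant C\|\H\|_2^2/\sqrt[8]{\varepsilon}$ gives $\|\,\scal{\nabla\varphi}{\nabla P}\,\|_2^2\leqslant C\|\H\|_2^2\|P\|_{\S^n}^2$ with no blow-up, and a second application (against $\|X\|^{-2}$-type weights, or simply re-running Chebyshev at the inner/outer radii) turns the bound into the needed $C\sqrt[16]{\varepsilon}\,\mu_k^{S_M}\|\varphi P\|_2$. Up to relabelling the root of $\varepsilon$ and absorbing constants into $C(n,k)$, this completes both parts, with the hypothesis $\varepsilon\leqslant(2C)^{-32}$ used exactly to guarantee $\|\varphi P\|_2\geqslant\frac12\|\H\|_2^{-k}\|P\|_{\S^n}$ so that the two estimates can be chained.
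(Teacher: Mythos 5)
First, note that the paper does not prove Lemma~\ref{Ppresquortho2} at all: it is quoted from \cite{AG1} and used as a black box. So there is no ``paper's own proof'' to compare against; what can be assessed is the internal correctness of your attempt, and there I see a real gap.

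The problem is in the first estimate. You reduce it (correctly) to showing that $\frac{1}{v_M}\int_M\varphi^2\,Q(X/|X|)\,\vol$ is close to $\frac{1}{\Vol\S^n}\int_{\S^n}Q$ for $Q=P^2$, and you then assert that this ``is cheaper because we only need the total mass on $\S^n$ of the pushed-forward measure to be $1+\tau(\varepsilon)$''. That claim is false. Knowing that the pushforward of $\varphi^2\,\vol/v_M$ under $x\mapsto X(x)/|X(x)|$ has total mass close to $1$ says nothing about the value of $\int P^2\,d\mu$: a probability measure concentrated near one point of $\S^n$ and the uniform one give wildly different values of $\int P^2$ already for $P$ a coordinate function. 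You need the pushforward \emph{density} to be close to the uniform one in a quantitative ($L^2$ or better) sense. Your fallback appeal to Lemma~\ref{banalite} --- that $\nu$ is $L^2$-close to $X/|X|$, hence the radial projection is an ``$L^2$-almost-isometry'' --- does not deliver this either: the pushforward density involves $1/|J\Phi|$ and a sum over preimages ($M$ need not be a radial graph over $\S^n$ for an immersion), and $L^2$-smallness of $\nu-X/|X|$ is not enough to control $1/|J\Phi|$ where $|J\Phi|$ degenerates. This is precisely why the paper needs Lemma~\ref{Ppresquortho2} as an \emph{input} to the density control \eqref{passepartout}: the logical order is the reverse of the one your sketch suggests, and invoking ``passepartout-type'' control here would be circular. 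The actual proof must proceed by other means (in \cite{AG1} it goes through the pinching hypotheses and the structure of harmonic polynomials, e.g.\ exploiting $\overline X=0$ for degree $1$ and handling higher degrees separately), not by a pushforward argument.

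Two smaller remarks on the second estimate. The structural formula $\Delta(P|_M)=-\mathrm{Hess}\,P(\nu,\nu)+n\H\,\partial_\nu P$ and the replacement $\nu\rightsquigarrow X/|X|$, $|X|\rightsquigarrow1/\|\H\|_2$, $|\H|\rightsquigarrow\|\H\|_2$ are the right skeleton, and the observation that $\varepsilon\leqslant(2C)^{-32}$ is used exactly to chain the two estimates is correct. But the cutoff term is handled clumsily: you yourself flag that the crude bound on $|\nabla\varphi|$ loses too much, and the proposed fix (``a second application of Chebyshev'') is not actually carried out. The clean route is to notice that $\nabla\varphi=2\psi'(|X|^2)X^T$, so $\|\nabla\varphi\|_2\leqslant C\|\H\|_2^2\varepsilon^{-1/16}\|X^T\|_2\leqslant C\|\H\|_2\,\varepsilon^{7/16}$ by Lemma~\ref{banalite}, which is far below $\sqrt[16]{\varepsilon}$; the $P\Delta\varphi$ term requires a similar but more delicate argument (it brings in $|X^T|^2$, not just $\|X^T\|_2$). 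As it stands this part is a plausible sketch rather than a proof, but it is the first part that contains the genuine logical gap.
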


\section{Proof of Inequality \ref{passepartout}}\label{poipa}

By a homogeneity, we can assume $\|\H\|_2=1$. Let $\theta\in(0,1)$, $x\in\S^n$ and set $V^n(s)=\Vol (B(x,s)\cap \S^n)$. Let $\beta(\theta,r)>0$ small enough so that $(1+\theta/2)V^n\bigl((1+2\beta)r\bigr)\leqslant (1+\theta)V^n(r)$ and $(1-\theta/2)V^n\bigl((1-2\beta)r\bigr)\geqslant(1-\theta) V^n(r)$. Let $f_1:\S^n\to[0,1]$ (resp. $f_2:\S^n\to[0,1]$) be a smooth function such that $f_1=1$ on $B_x\bigl((1+\beta)r\bigr)\cap\S^n$ (resp. $f_2=1$ on $B_x\bigl((1-2\beta)r\bigr)\cap\S^n$) and $f_1=0$ outside $B_x\bigl((1+2\beta) r\bigr)\cap\S^n$ (resp. $f_2=0$ outside $B_x\bigl((1-\beta) r\bigr)\cap\S^n$). There exist an integer $N(\theta,r)$ and a family $(P^i_k)_{k\leqslant N}$ such that $P^i_k\in\hkr$ and $A=\sup_{\S^n}\bigl|f_i-\sum_{k\leqslant N}P^i_k\bigr|\leqslant \|f_i\|_{\S^n}\theta/18$. We extend $f_i$ to $\R^{n+1}\setminus\{0\}$ by $f_i(X)=f_i\bigl(\frac{X}{|X|}\bigr)$. Then we have
$$\displaylines{
\Bigl|\|\varphi f_i\|_2^2-\frac{1}{\Vol\S^n}\int_{\S^n}|f_i|^2\Bigr|\leqslant I_1+I_2+I_3}$$
where 
$$I_1:=\Bigl|\frac{1}{v_M}\int_M\Bigl(|\varphi f_i|^2-\varphi^2\bigl(\sum_{k\leqslant N}|X|^{-k}P^i_k\bigr)^2\Bigr)\vol\Bigr|$$
$$I_2:=\Bigl|\frac{1}{v_M}\int_M\varphi^2\bigl(\sum_{k\leqslant N}|X|^{-k}P^i_k\bigr)^2\vol-\sum_{k\leqslant N}\|P^i_k\|_{\S^n}^2\Bigr|$$
and
$$I_3:=\Bigl|\frac{1}{\Vol\S^n}\int_{\S^n}\Bigl(\bigl(\sum_{k\leqslant N}P^i_k\bigr)^2-f_i^2\Bigr)\Bigr|.$$
On $\S^n$ we have $\bigl|f_i^2-(\sum_{k\leqslant N}P^i_k)^2\bigr|\leqslant A\bigl(2\sup_{\S^n}|f_i|+A\bigr)\leqslant\|f_i\|_{\S^n}^2\theta/6$ and on $M$ we have
\begin{align*}\varphi^2\Bigl|f_i^2(X)-\bigl(\sum_{k\leqslant N}|X|^{-k}P^i_k(X)\bigr)^2\Bigr|&\leqslant\Bigl|f_i^2\bigl(\frac{X}{|X|}\bigr)-\Bigl(\sum_{k\leqslant N}P^i_k\bigl(\frac{X}{|X|}\bigr)\Bigr)^2\Bigr|\leqslant\|f_i\|_{\S^n}^2\theta/6\cr.
\end{align*}
Hence $I_1+I_3\leqslant\|f_i\|_{\S^n}^2\theta/3$. Now
\begin{align*}
I_2\leqslant&\Bigl|\frac{1}{v_M}\insm\varphi^2\sum_{k\leqslant N}\frac{(P_k^i)^2}{|X|^{2k}}\vol-\sum_{k\leqslant N}\|P^i_k\|_{\S^n}^2\Bigr|+\frac{1}{v_M}\Bigl|\insm\varphi^2\sum_{1\leqslant k\neq k'\leqslant N}\frac{P_k^i P_{k'}^i}{|X|^{k+k'}}\vol\Bigr|\\
\leqslant&\frac{1}{v_M}\insm\varphi^2\sum_{k\leqslant N}\Bigl|\frac{1}{|X|^{2k}}-\|\H\|_2^{2k}\Bigr|(P_k^i)^2\vol \\
&+\frac{1}{v_M}\insm\sum_{1\leqslant k\neq k'\leqslant N}\varphi^2\Bigl|\frac{1}{|X|^{k+k'}}-\|\H\|_2^{k+k'}\Bigr||P_k^i P_{k'}^i|\vol\\
&+\sum_{k\leqslant N}\bigl|\|\H\|_2^{2k}\normdfonc{\varphi P_k^i}^2-\normdpoly{P_k^i}^2\bigr|+\sum_{1\leqslant k\neq k'\leqslant N}\frac{\|\H\|_2^{k+k'}}{v_M}\Bigl|\insm\varphi^2 P_k^i P_{k'}^i\vol\Bigr|
\end{align*}
We have $\varphi^2\bigl|\frac{1}{|X|^{k+k'}}-\|\H\|_2^{k+k'}\bigr|\leqslant \varphi^2 (k+k')2^{k+k'+2}\sqrt[16]{\varepsilon}\|\H\|_2^{k+k'}$ by assumption on $\varphi$. From this and Lemma \ref{Ppresquortho2}, we have
\begin{align*}I_2\leqslant& N^24^{N+1}\sqrt[16]{\varepsilon}\sum_{k\leqslant N}\|\H\|_2^{2k}\normdfonc{\varphi P_k^i}^2+\sqrt[32]{\varepsilon}\sum_{k\leqslant N}C(n,k)\normdpoly{P_k^i}^2\\
&+\sum_{1\leqslant k\neq k'\leqslant N}\frac{\|\H\|_2^{k+k'}}{v_M}\Bigl|\insm \varphi^2 P_k^i P_{k'}^i\vol\Bigr|\\
\leqslant& C(n,N)\sqrt[32]{\varepsilon}+\sum_{1\leqslant k\neq k'\leqslant N}\frac{\|\H\|_2^{k+k'}}{v_M}\Bigl|\insm \varphi^2 P_k^i P_{k'}^i\vol\Bigr|
\end{align*}
and, by Lemma \ref{Ppresquortho2}, we have
\begin{align*}
&|\frac{\|\H\|_2^2(\mu_k-\mu_{k'})}{v_M}\int_M\varphi^2 P^i_kP^i_{k'}\vol\Bigr|\\
&\leqslant\int_M\frac{|\varphi P^i_k\bigl(\Delta(\varphi P^i_{k'})-\|\H\|_2^2\mu_{k'}\varphi P^i_{k'}\bigr)|}{v_M}\vol+\int_M\frac{|\varphi P^i_{k'}\bigl(\Delta(\varphi P^i_k)-\|\H\|_2^2\mu_k\varphi P^i_k\bigr)|}{v_M}\vol\\
&\leqslant \|\varphi P^i_k\|_2\bigl\|\Delta(\varphi P^i_{k'})-\|\H\|_2^2\mu_{k'}\varphi P^i_{k'}\bigr\|_2+\|\varphi P^i_{k'}\|_2\bigl\|\Delta(\varphi P^i_k)-\|\H\|_2^2\mu_k\varphi P^i_k\bigr\|_2\\
&\leqslant C(n,N)\sqrt[16]{\varepsilon}\|\H\|_2^2\|\varphi P^i_{k'}\|_2\|\varphi P^i_k\|_2
\end{align*}
under the condition $\varepsilon\leqslant(\frac{1}{2C(n,N)})^{32}$. Since $\mu_k-\mu_{k'}\geqslant n$ when $k\neq k'$, we get
$$\sum_{1\leqslant k\neq k'\leqslant N}\Bigl|\frac{1}{v_M}\int_M\varphi^2 P^i_kP^i_{k'}\vol\Bigr|\leqslant \sum_{1\leqslant k\neq k'\leqslant N}C(n,N)\sqrt[16]{\varepsilon}\|\varphi P^i_{k'}\|_2\|\varphi P^i_k\|_2\leqslant\frac{C(n,N)\sqrt[16]{\varepsilon}}{\|\H\|_2^{k+k'}}$$
hence $I_2\leqslant C(n,N)\sqrt[32]{\varepsilon}$ and
$$\Bigl|\|\varphi f_i\|_2^2-\frac{1}{\Vol\S^n}\int_{\S^n}f_i^2\Bigr|\leqslant C(n,N)\sqrt[32]{\varepsilon}+\frac{\theta}{3}\|f_i\|_{\S^n}^2.$$
We infer that if $\sqrt[32]{\varepsilon}\leqslant\frac{V^n((1-2\beta)r)\theta}{6C(n,N)\Vol\S^n}\leqslant\frac{\|f_i\|_{\S^n}^2\theta}{6C(n,N)}$, then we have
$$\Bigl|\|\varphi f_i\|_2^2-\frac{1}{\Vol\S^n}\int_{\S^n}|f_i|^2\Bigr|\leqslant\theta\|f_i\|^2_{\S^n}/2$$
Note that $N$ depends on $r$ and $\theta$ but not on $x$ since $O(n+1)$ acts transitively on $\S^n$.
By assumption on $f_1$ and $f_2$, we have
\begin{align*}
&&\frac{\Vol( B_x((1+\beta)r-\sqrt[16]{\varepsilon}))\cap M\cap A_{\sqrt[16]{\varepsilon}})}{v_M}&\leqslant\|\varphi f_1\|_2^2\leqslant(1+\frac{\theta}{2})\|f_1\|_{\S^n}^2\\
&&&\leqslant(1+\frac{\theta}{2})\frac{V^n((1+2\beta) r)}{\Vol\S^n}\leqslant(1+\theta)\frac{V^n( r)}{\Vol\S^n}\\
&&\frac{\Vol( B_x((1-\beta)r+2\sqrt[16]{\varepsilon})\cap M\cap A_{2\sqrt[16]{\varepsilon}})}{v_M}&\geqslant\|\varphi f_2\|_2^2\geqslant(1-\frac{\theta}{2})\|f_2\|_{\S^n}^2\\
&&&\geqslant(1-\frac{\theta}{2})\frac{V^n((1-2\beta) r)}{\Vol\S^n}\geqslant(1-\theta)\frac{V^n( r)}{\Vol\S^n}
\end{align*}
In the second estimates, we can replace $\varepsilon$ by $\varepsilon/2^{16}$ as soon as we assume that $\varepsilon\leqslant\bigl(\min(\frac{1}{4^{16}},\frac{1}{(2C(n,N))^{32}},(\beta r)^{16},(\frac{\|f_i\|_{\S^n}^2\theta}{6(C(n,N)})^{32}\bigr)=K(\theta,r,n)$. Then we have $(1-\beta)r+\sqrt[16]{\varepsilon}\leqslant r\leqslant (1+\beta)r-\sqrt[16]{\varepsilon}$ and get
$$\Bigl|\frac{\Vol(B_x(r)\cap M\cap A_{\sqrt[16]{\varepsilon}})}{v_M}-\frac{V^n(r)}{\Vol \S^n}\Bigr|\leqslant\theta\frac{V^n(r)}{\Vol \S^n}$$
Combined with Lemma \ref{estimplus}, we get the result with $\tau(\varepsilon|r,n)=\min\{\theta/\,2^{16}\varepsilon\leqslant K(\theta,r,n)\}$.

%%%%%%%%%%%%%%%%%%%%%%%%%%%%%%%%%%%%%%%%%%%%%%%%%%%%%%%%%%%%%%%%%%%%%%%%%%%%%%%%%%%%%%%%%%%%%%%%%%%

\section{Proof of Theorem \ref{Main}}\label{Topping}

In this section, we extend the technique developed by P.Topping in \cite{Top} to get an upper bound of ${\rm Diam} (M)$ by $\int_M|\H|^{n-1}$. 

\subsection{Decomposition lemma} We begin by a general result on approximation of Euclidean submanifolds in Hausdorff distance by the union of a subset of large volume and a finite family of geodesic subtrees of total length bounded by $\int_M|\H|^{n-1}$.

\begin{lemma}\label{decomp}
Let $M^m$ be an Euclidean compact submanifold of $\R^{n+1}$ and $A\subset M$ a closed subset. There exists a constant $C(m)$ and a finite family of geodesic trees $T_i\subset M$ such that $A\cap T_i\neq\emptyset$, $d_H\bigl(A\cup(\cup_i T_i),M\bigr)\leqslant C \bigl(\Vol(M\setminus A)\bigr)^\frac{1}{m}$ and $\sum_im_1(T_i)\leqslant C^{m(m-1)}\int_{M\setminus A}|\H|^{m-1}$.
\end{lemma}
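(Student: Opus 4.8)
The plan is to adapt Topping's covering/telescoping argument from \cite{Top}. Topping bounds the diameter of a closed submanifold by the integral of the mean curvature to the power $m-1$; the new feature here is that we must perform the construction relative to a prescribed ``good'' set $A$, so that the geodesic trees $T_i$ only need to account for the complement $M\setminus A$, and the length bound involves $\int_{M\setminus A}|\H|^{m-1}$ rather than $\int_M|\H|^{m-1}$.

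First I would recall the key estimate underlying Topping's method: a monotonicity-type inequality for the extrinsic distance function. For $x,y\in M$ and $r\leqslant d_M^{\text{ext}}$ or so, one has a lower bound of the form $\Vol\bigl(B^M_{\text{intr}}(x,r)\bigr)\geqslant c(m)\,r^m$ whenever the ball does not already ``see'' enough mean curvature; more precisely, Topping's lemma gives that for all $r>0$, $\Vol\bigl(M\cap B_x^{\R^{n+1}}(r)\bigr)\geqslant c(m)\min\bigl(r,\,1/\!\!\fint_{B}|\H|\bigr)^m$ type statements, which after summation over dyadic scales yields $d_M(x,y)\leqslant C(m)\int_{\gamma}\cdots$. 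The cleanest route is to use the consequence that, along any path, the intrinsic distance between two points is controlled by the volume swept out plus the curvature integral over a neighbourhood. Then I would run the following greedy construction: pick a maximal $\delta$-separated net in $M$ for $\delta=\bigl(\Vol(M\setminus A)\bigr)^{1/m}$; every net point either lies within controlled distance of $A$ (because a ball of radius $\sim\delta$ around it must meet $A$, as otherwise it would be entirely in $M\setminus A$ and have volume $\geqslant c\delta^m=c\Vol(M\setminus A)$, contradiction once the constant is chosen right), in which case it is handled by $A$ directly, or it lies in a region where one connects it back toward $A$ by a geodesic path, and the union of all such paths is organized into finitely many trees $T_i$ each meeting $A$. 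The total length of the paths is then estimated, scale by scale, by $\int_{M\setminus A}|\H|^{m-1}$ using Topping's telescoping estimate applied to tubular neighbourhoods of the paths that stay inside $M\setminus A$ (up to a bounded overlap coming from the net being separated).

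For the Hausdorff bound $d_H\bigl(A\cup(\cup_i T_i),M\bigr)\leqslant C\bigl(\Vol(M\setminus A)\bigr)^{1/m}$, the point is that every point of $M$ is within $\delta$ of the net by maximality, and every net point is within $C\delta$ of $A\cup(\cup_i T_i)$ by construction, so the triangle inequality in the intrinsic (hence also extrinsic) metric closes the argument; the reverse inclusion is trivial since $A\cup(\cup_i T_i)\subset M$. For the length bound, I would use that on each dyadic annulus the number of net points at a given scale contributing to the trees is controlled by the volume they carry in $M\setminus A$ divided by $\delta^m$, and Topping's inequality converts ``a geodesic segment of length $\ell$ whose $r$-tube lies in $M\setminus A$'' into the bound $\ell\leqslant C(m)\int_{\text{tube}}|\H|^{m-1}$ after summing a geometric series; the combinatorial bookkeeping produces the exponent $C^{m(m-1)}$ in front of the integral.

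The main obstacle I expect is the bookkeeping that organizes the individual connecting geodesics into \emph{finitely many trees} $T_i$ with $A\cap T_i\neq\emptyset$ while keeping the total length additive and controlled — in particular ensuring that the tubular neighbourhoods used in the telescoping estimate have bounded overlap (so that summing the local curvature integrals does not lose more than a dimensional constant) and that each resulting tree genuinely touches $A$. Topping's original argument is already delicate on this point; doing it \emph{relative} to $A$, so that one never ``spends'' curvature from inside $A$, is the real content here. A secondary technical issue is that $M$ is only immersed, not embedded, so all balls and distances must be taken intrinsically (or via the immersion with multiplicity), and one has to check Topping's monotonicity survives in that setting — but this is standard and already implicit in \cite{Top}.
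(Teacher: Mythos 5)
Your strategy (maximal $\delta$-net plus connecting paths) is genuinely different from the paper's proof, which is a greedy farthest-point iteration. The paper repeatedly picks the point $x_j$ of $M$ at \emph{maximal} intrinsic distance $l_j$ from $A\cup\gamma_0\cup\cdots\cup\gamma_{j-1}$, runs a minimizing geodesic $\gamma_j$ back to that set, and covers the ``interior'' part $\gamma_j(I_j)$ by a finite Vitali family of disjoint intrinsic balls contained in $M\setminus A$. Topping's dichotomy (Lemma 4.2 in the paper) is invoked at each center $\gamma_j(t)$ with $R=(\Vol(M\setminus A)/\delta)^{1/m}$: alternative (ii) would force $\Vol B_{\gamma_j(t)}(R)>\delta R^m=\Vol(M\setminus A)$, impossible since the ball sits inside $M\setminus A$, so alternative (i) must hold, giving a radius $r_{j,t}\leq R$ with $r_{j,t}\leq\delta^{1-m}\int_{B(r_{j,t})}|\H|^{m-1}$. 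Summing over the Vitali balls yields $l_j\leq\tfrac{10}{\delta^{m-1}}\sum\int_{B}|\H|^{m-1}$. The crucial mechanism is that the balls from \emph{all} iterations are pairwise disjoint, because $\gamma_j(I_j)$ is trimmed to stay at distance $\geq 2R$ from $A\cup\gamma_0\cup\cdots\cup\gamma_{j-1}$ and all radii are $\leq R$. Disjointness is what turns the sum over all $j$ into the single integral $\int_{M\setminus A}|\H|^{m-1}$, and the finitely-many-trees structure with $A\cap T_i\neq\emptyset$ falls out for free since each $\gamma_j$ terminates on the previously constructed set.

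Your proposal has two gaps at precisely these points. First, the claim that bounded overlap of the tubes ``comes from the net being separated'' does not hold: a $\delta$-separated net bounds the multiplicity of balls of radius $\sim\delta$ \emph{around net points}, but the long connecting geodesics from far-away net points to $A$ can all funnel through the same bottleneck region, making the tube multiplicity along those paths unbounded. Some form of a tree or Vitali structure is needed so that each piece of $M\setminus A$ is charged only a bounded number of times; the paper achieves this by the trimming $I_j=[2R,\,l_j-2R]$. Second, your dichotomy ``a $\delta$-ball around a net point either meets $A$ or has volume $\geq c\delta^m$, contradiction'' silently assumes alternative (ii) of Topping's lemma; if alternative (i) holds instead, the ball can have arbitrarily small volume and the ``contradiction'' evaporates — that is exactly the regime where the curvature integral must take over. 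You would need to fold this case explicitly into the counting, at which point the net-based bookkeeping becomes considerably more delicate than the paper's iteration. You do flag the tree-organization issue as the main obstacle, and you are right that it is the real content; the paper's greedy construction is essentially a way of discharging that obstacle by making disjointness automatic.
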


\begin{proof}
In \cite{Top}, using the Michael-Simon Sobolev inequality as a differential inequation on the volume of intrinsic spheres, P.Topping prove the following lemma (slightly modified for our purpose). 

\begin{lemma}[\cite{Top}]\label{Top}
Suppose that $M^m$ is a submanifold smoothly
immersed in $\R^{n+1}$, which is complete with respect to the induced metric. Then there
exists a constant $\delta(m)>0$ such that for any $x\in M$ and $R>0$, at
least one of the following is true:
\begin{itemize}
\item[(i)] $M(x,R):=\sup_{r\in(0,R]}\int_{B_x(r)}|\H|^{m-1}/r> \delta^{m-1}$;
\item[(ii)] $\kappa(x,R):=\inf_{r\in(0,R]}\frac{\Vol B_x(r)}{r^m}>\delta$.
\end{itemize}
Where $B_x(r)$ is the geodesic ball in $M$ for the intrinsic distance.
\end{lemma}

In this section, $d$ stands for the intrinsic distance on $M$.

If $d_H(A,M)\leqslant 10(\frac{\Vol M\setminus A}{\delta(m)})^\frac{1}{m} $, then we set $T=\emptyset$.

Otherwise, there exists $x_0\in M$ such that $d(A,x_0)=d_H(A,M)\geqslant 10(\frac{\Vol M\setminus A}{\delta(m)})^\frac{1}{m} $.
Let $\gamma_{0}:[0,l_0]\to M\setminus A$ be a normal minimizing geodesic from $x_{0}$ to $A$. For any $t\in I_0=[0,l_0-(\frac{\Vol M\setminus A}{\delta(m)})^\frac{1}{m}]$, we have $B_{\gamma_{0}(t)}\bigl((\frac{\Vol M\setminus A}{\delta(m)})^\frac{1}{m}\bigr)\subset M\setminus A$ and by the previous lemma, there exists $r_{0,t}\leqslant(\frac{\Vol M\setminus A}{\delta(m)})^\frac{1}{m}$ such that $r_{0,t}\leqslant\frac{1}{\delta^{m-1}}\int_{B_{\gamma_{0}(t)}(r_{0,t})}|\H|^{m-1}$. By compactness of $\gamma_{0}(I_0)$ and by Wiener's selection principle, there exists a finite family $(t_j)_{j\in J_0}$ of elements of $I_0$ such that the balls of the family $\mathcal{F}_0=\bigl(B_{\gamma_{0}(t_j)}(r_{0,t_j})\bigr)_{j\in J_0}$ are disjoint and $\gamma(I_0)\subset \cup_{j\in J_0}B_{\gamma_{0}(t_j)}(3r_{0,t_j})$. Hence we have
\[\frac{\delta^{m-1} (l_0-(\frac{\Vol M\setminus A}{\delta})^\frac{1}{m})}{6}\leqslant\delta^{m-1}\sum_{j\in J_0}r_{0,t_j}\leqslant\sum_{j\in J_0}\int_{B_{\gamma_{0}(t_j)}(r_{0,t_j})}|\H|^{m-1}\]
And by assumption on $l_0$, we get $10(\frac{\Vol M\setminus A}{\delta(m)})^\frac{1}{m} \leqslant l_0\leqslant\frac{10}{\delta^{m-1}}\sum_{j\in J_0}\int_{B_{\gamma_{0}(t_j)}(r_{0,t_j})}|\H|^{m-1}$.

If $d_H\bigl(A\cup\gamma_{0}([0,l_0]),M\bigr)\leqslant 10(\frac{\Vol M\setminus A}{\delta(m)})^\frac{1}{m}$, we set $T=\gamma_{0}([0,l_0])$. Otherwise, we set $x_{1}$ a point of $M\setminus A$ at maximal distance $l_1$ from $A\cup\gamma_{0}([0,l_0])$ and $\gamma_{1}$ the corresponding minimal geodesic. We set $I_1=[2(\frac{\Vol M\setminus A}{\delta(m)})^\frac{1}{m},l_1-2(\frac{\Vol M\setminus A}{\delta(m)})^\frac{1}{m}]$. Once again, by the Wiener Lemma applied to $\gamma_{1}(I_1)$ we get a family of disjoint balls $\mathcal{F}_1=\bigl(B_{\gamma_{1}(t_j)}(r_{1,t_j}))_{j\in J_1}$ such that
$$\frac{\delta^{m-1}(l_1-4(\frac{\Vol M\setminus A}{\delta})^\frac{1}{\delta})}{6}\leqslant\delta^{m-1}
\sum_{j\in J_1}r_{1,t_j}\leqslant\sum_{j\in J_1}\int_{B_{\gamma_{1}(t_j)}(r_{1,t_j}^{~})}|\H|^{m-1}$$
which gives $10(\frac{\Vol M\setminus A}{\delta(m)})^\frac{1}{m} \leqslant l_1\leqslant\frac{10}{\delta^{m-1}}\sum_{j\in J_1}\int_{B_{\gamma_{1}(t_j)}(r_{1,t_j}^{~})}|\H|^{m-1}$.
Note also that the balls of the family $\mathcal{F}_1\cup\mathcal{F}_2$ are disjoint.

If $d_H\bigl(A\cup\gamma_{0}([0,l_0])\cup\gamma_{1}([0,l_1]),M\bigr)\leqslant10(\frac{\Vol M\setminus A}{\delta(m)})^\frac{1}{m}$, we set $T=\gamma_{0}([0,l_0])\cup\gamma_{1}([0,l_1])$. Note that $T$ is a geodesic tree (if $\gamma_{1}(l_1)\in\gamma_{0}([0,l_1])$) or the disjoint union of 2 geodesic trees.

If $d_H\bigl(A\cup\gamma_{0}([0,l_0])\cup\gamma_{1}([0,l_1]),M\bigr)\geqslant10(\frac{\Vol M\setminus A}{\delta(m)})^\frac{1}{m}$, then by iteration of what was made for $x_{1}$, $\gamma_{1}$ and $\mathcal{F}_1$, we construct a family $(x_{j})_j$ of points, a family  $(\gamma_{j})_j$ of geodesics and a family $(\mathcal{F}_j)_j$ of sets of disjoint balls. Since the $(x_{j})_j$ are $10(\frac{\Vol M\setminus A}{\delta(m)})^\frac{1}{m}$-separated in $M$ and since $M$ is compact, the families are finite and only a finite step of iterations can be made. The set $T=\cup_j\gamma_{j}([0,l_j])$ is the disjoint union of a finite set of finite geodesic trees and we have
\begin{align}\label{length}
 l(T)\leqslant\frac{10}{\delta^{m-1}}\sum_j\sum_{k\in J_j}\int_{B_{\gamma_{j}(t_j)}(r_{j,t_k})}|\H|^{m-1}\leqslant\frac{10}{\delta^{m-1}}\int_{M\setminus A}|\H|^{m-1}.
\end{align}
Moreover, the connected parts of $T$ are geodesic trees whose number is bounded above by $\frac{\delta^{\frac{m+1-m^2}{m}}}{(\Vol M\setminus A)^\frac{1}{m}}\int_{M\setminus A}|\H|^{m-1}$.
\end{proof}

\subsection{Proof of Theorem \ref{Main}}
We begin the proof by the case where $\int_{M_k}|\H|^{m-1}\leqslant A$. By Topping's upper bound on the diameter \cite{Top} and Blaschke selection theorem, the sequence $M_k$ converges, in Hausdorff topology, to a closed, connected limit set $M_\infty$, which contains $Z$.

It just remain to prove that $m_1(M_\infty\setminus Z)\leqslant C(m)A$. Let $\ell\in\N^*$ fixed. We set $Z_r=\{x\in\R^{n+1}/\,d(x,Z)\leqslant r\}$. By the Michael-Simon Sobolev inequality applied to a constant function, we have $(\Vol M_k)^\frac{1}{m}\leqslant C(m)\int_{M_k}|\H|^{m-1}$. By weak convergence of $(M_k)_k$ to $Z$, we have $\lim_k\Vol (M_k\setminus Z_{1/2\ell})/\Vol(M_k)=0$ and by Lemma \ref{decomp}, there exists a finite union of geodesic trees $T_k^\ell$ such that $\lim_kd_H\bigl((M_k\cap Z_{1/3\ell})\cup T_k^\ell,M_\infty\bigr)=0$ and $l(T_k^\ell)\leqslant C(m)\int_{M_k\setminus Z_{1/3\ell}}|\H|^{m-1}$ for any $k$. Moreover, by construction of the part $T$ in the proof of Lemma \ref{decomp}, each connected part of $T_k^\ell$ is a geodesic tree intersecting $Z_{1/3\ell}$, and by Inequality \eqref{length}, the number of such component leaving $Z_{2/3\ell}$ is bounded above by $3\ell C(m)\int_{M_k\setminus Z_{1/3\ell}}|\H|^{m-1}$. We can assume that this number is constant up to a subsequence. Their union forms a sequence of compact sets $(\tilde{T}^\ell_k)$ which, up to a subsequence, converges to a set $Y$ that contains $M_\infty\setminus Z_{1/\ell}$. By lower semi-continuity of the $m_1$-measure for sequence of trees (see Theorem 3.18 in \cite{Fa}), we get that $m_1(M_\infty\setminus Z_{1/\ell})\leqslant\liminf_kl(\tilde{T}^\ell_k)\leqslant C(m)\liminf_k\int_{M_k\setminus Z_{1/3\ell}}|\H|^{m-1}$. Since $M_\infty\setminus Z=\cup_{\ell\in\N^*}M_\infty\setminus Z_{1/\ell}$, we get the result. So $M_\infty=Z\cup T$ with $T$ a $1$-dimensional subset of $\R^{n+1}$ of measure less than $C(m)\liminf_k\int_{M_k\setminus Z}|\H|^{m-1}\leqslant C(m)A$.

In the case $\int_{M_k}|\H|^p\leqslant A$ with $p>m-1$, we have 
$$\int_{M_k\setminus Z_{1/3\ell}}\hskip-5mm|\H|^{m-1}\leqslant\bigl(\frac{\Vol M_k\setminus Z_{1/3\ell}}{\Vol M_k}\bigr)^\frac{p-m+1}{p}\Vol M_k\|\H\|_p^{m-1}$$ 
So the weak convergence to $Z$ implies that $m_1(M_\infty\setminus Z_{1/3\ell})=0$ for any $\ell$. Since $M_\infty\setminus Z_{1/3\ell}\neq\emptyset$ implies $m_1(M_\infty\setminus Z)\geqslant1/3\ell$  by what precedes, we get $M_\infty \subset Z$, hence $M_\infty=Z$.
\medskip

For the proof of Theorems \ref{Hausdorff} and \ref{HausdorffCritic}, we can assume that $\overline{X}(M_k)=0$ and $\|\H\|_2=1\leqslant\|\H\|_p$ by scaling. Hence we have $v_{M_k}\|\H\|_p^{n-1}\leqslant v_{M_k}\|\H\|_p^n\leqslant A$ and 
$S_{M_k}=\S^n$ for any $k$. Inequality \eqref{estiray} and Lemma \ref{decomp} give the Theorems.

%%%%%%%%%%%%%%%%%%%%%%%%%%%%%%%%%%%%%%%%%%%%%%%%%%%%%%%%%%%%%%%%%%%%%%%%%%%%%%%%%%%%%%%%%%%%%%%%%%%%%%%%%%%%%%%%%%%%%%%%%%%%%%%%%%%%%%%%%%%%%%%%%%%%%%%%%%%%%%%%%%%%%%%%%%%%%%%%%%%%%%%%%%%%%%%%%%%%%%%%%%%%%%%%%%%%%%%%%%%%%%%%%%%%%%%%%%%%%%%%%%%%%%%%%%%%%%%%%%%%%%%%%%%%%%%%%%%%%%%

%%%%%%%%%%%%%%%%%%%%%%%%%%%%%%%%%%%%%%%%%%%%%%%%%%%%%%%%%%%%%%%%%%%%%%%%%%%%%%%%%%%%%%%%%%%%%%%%%%%
\section{Proof of Theorem \ref{constrfond}}\label{se}

We first prove a weak version of Theorem \ref{constrfond}, where the set $T$ is a segment $[x_0,x_0+l\nu]$ with $x_0\in M_1$ and $\nu$ a normal vector to $M_1$ at $x_0$.

\noindent\underline{Adding of a segment and estimates on the curvature}

We take off a small ball of $M_2$ and  instead we glue smoothly a curved cylinder along one of its boundary and which is isometric to the product $[0,1]\times\frac{1}{10}\S^{m-1}$ at the neighbourhood of its other boundary component. 
\begin{center}
\includegraphics[width=1.5cm]{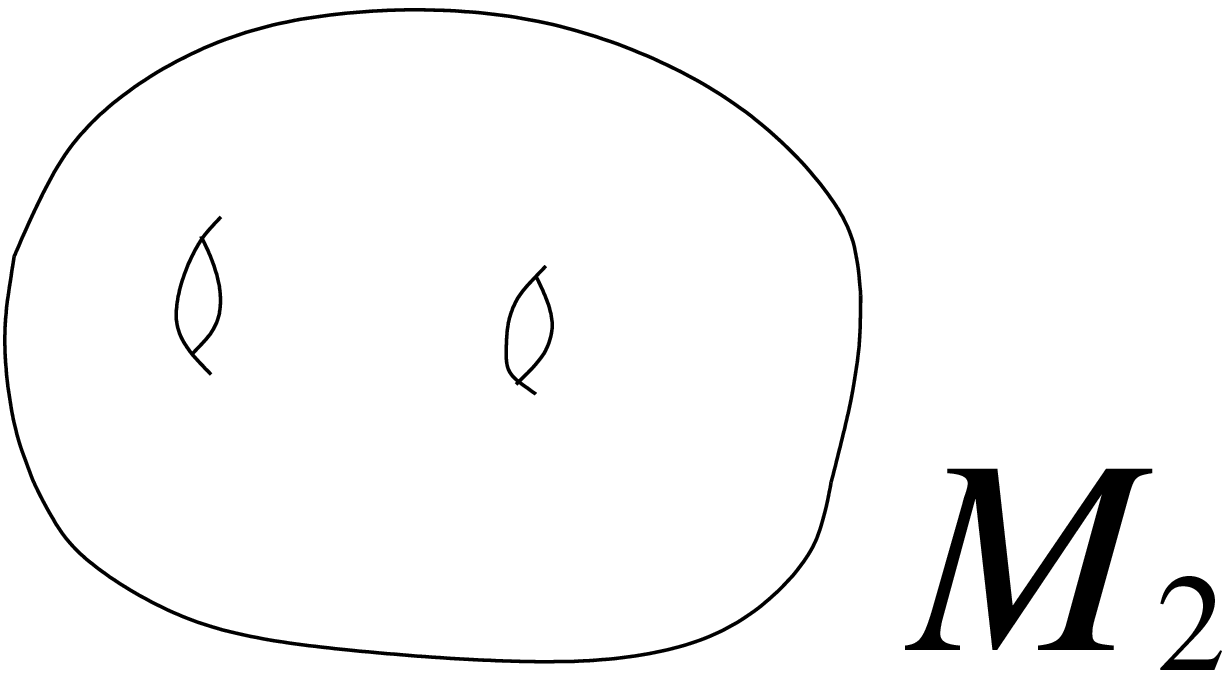}
\end{center}
We note $H_1$ the resulting submanifold and $H_\varepsilon=\varepsilon H_1$. Let $c:[0,l]\to\R^+$ be a $\mathcal{C}^1$ positive function, constant equal to $\frac{1}{10}$ at the neighbourhoods of $0$ and $l$, $T_{c,\varepsilon}$ be a cylinder of revolution isometric to $\{(t,u)\in[0,l]\times \R^{m}/|u|=\varepsilon c(t)\}$ and $J_1$ be a cylinder of revolution isometric to $[0,1/4]\times\frac{1}{10}\S^{m-1}$ at the neighbourhood of one of its boundary component and isometric to the flat annulus $B_0(\frac{3}{10})\setminus B_0(\frac{2}{10})\subset\R^m$) at the neighbourhood of its other boundary component. Note that in this paper we will only use the case $c\equiv\frac{1}{10}$ but the general case will be used in \cite{AG1}. We also set $J_\varepsilon= \varepsilon J_1$ and $N_{c,\varepsilon}$ the submanifold obtained by gluing $H_\varepsilon$, $T_{c,\varepsilon}$ and $J_\varepsilon$.
\begin{center}
\includegraphics[width=3cm]{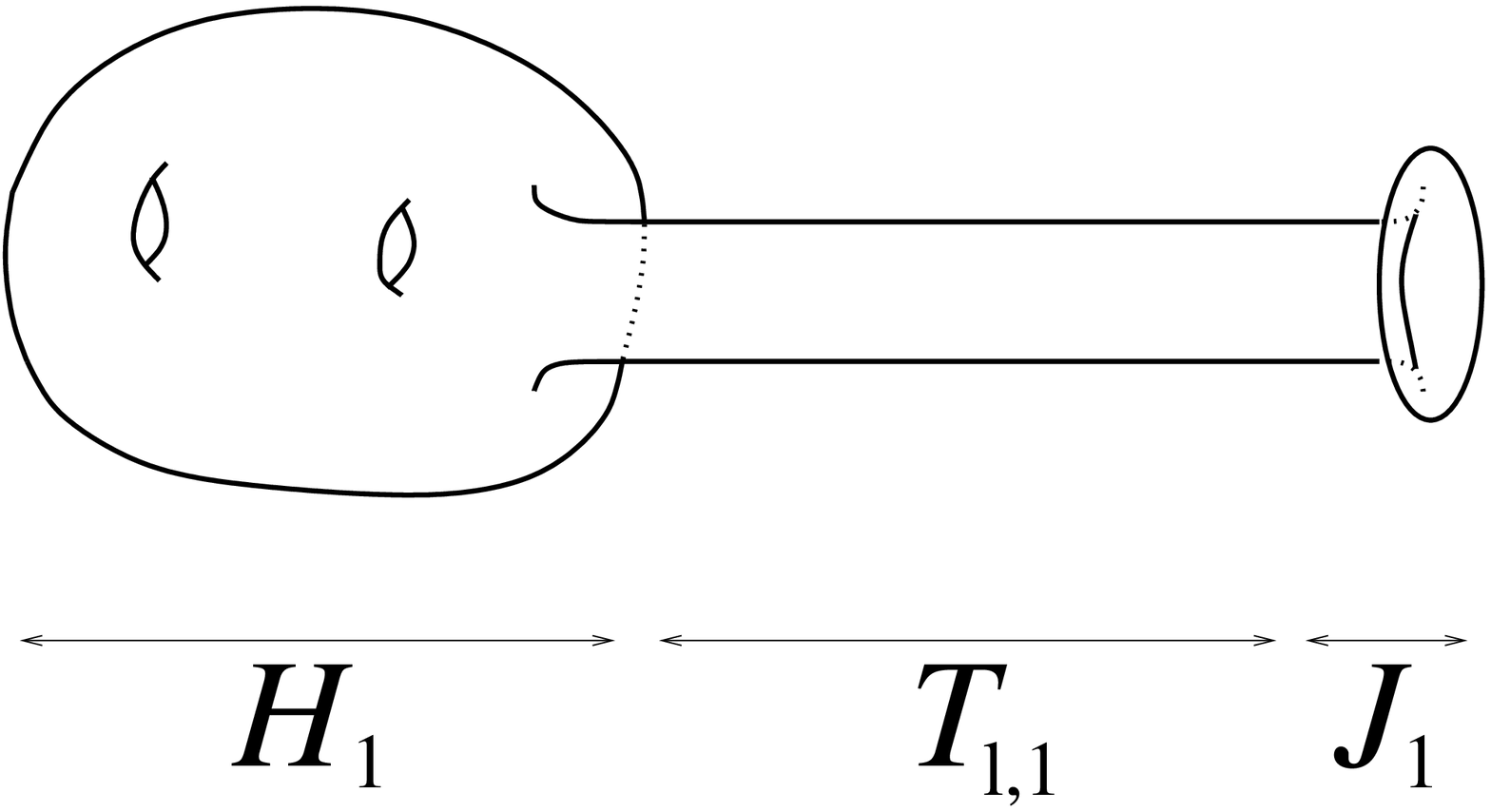}
\end{center}
Since the second fundamental form of $T_{c,\varepsilon}$ is given by $|B|^2=\frac{(\varepsilon c'')^2}{(1+(\varepsilon c')^2)^3}+\frac{m-1}{\varepsilon^2 c^2(1+(\varepsilon c')^2)}$, we have $\int_{N_{c,\varepsilon}}|\B|^\alpha dv=a(H_1,J_1)\varepsilon^{m-\alpha}+\Vol\S^{m-1}\varepsilon^{m-1-\alpha}(m-1)^\frac{\alpha}{2}\int_0^lc^{m-1-\alpha}+O_{c,\alpha}(\varepsilon^{m+1-\alpha})$, where $a(H_1,J_1)$ is a constant that depends only on $H_1$ and $J_1$ (not on $c$, $l$ and $\varepsilon$).

We set $M_1^\varepsilon$ the submanifold of $\R^{n+1}$ obtained by flattening $M_1$ at the neighbourhood of a point $x_0\in M_1$ and taking out a ball centred at $x_0$ and of radius $\frac{3\varepsilon}{10}$: $M_1$ is locally equal to $\{x_0+w+f(w),\,w\in B_0(\varepsilon_0)\subset T_{x_0}M_1\}$ where $f:B_0(\varepsilon_0)\subset T_{x_0}M_1\to N_{x_0}M_1$ is a smooth function and $N_{x_0}M_1$ is the normal bundle $M_1$ at $x_0$. Let $\varphi:\R_+\to[0,1]$ be a smooth function such that $\varphi=0$ on $[0,\frac{\varepsilon_0}{3}]$ and $\varphi=1$ on $[\frac{2\varepsilon_0}{3},+\infty)$. We set $M_1^\varepsilon$ the submanifold obtained by replacing the subset $\{x_0+w+f(w),\,w\in B_0(\varepsilon_0)\subset T_{x_0}M_1\}$ by $\{x_0+w+f_\varepsilon(w),\,w\in B_0(\varepsilon_0)\setminus B_0(3\varepsilon/10)\subset T_{x_0}M_1\}$, with $f_\varepsilon(w)=f\bigl(\varphi(\frac{\varepsilon_0\|w\|}{\varepsilon})w\bigr)$ for any $\varepsilon\leqslant3\varepsilon_0/2$. Note that $M_1^\varepsilon$ is a smooth deformation of $M_1$ in a neighbourhood of $x_0$ and its boundary has a neighbourhood isometric to a flat annulus $B_0(\varepsilon/3)\setminus B_0(3\varepsilon/10)$ in $\R^m$. Note that for $\epsilon$ small enough, $M_1^\varepsilon\setminus\{x\in M_1^\varepsilon/d(x,\partial M_1^\varepsilon)\leqslant 8\varepsilon\}$ is a subset of $M_1$. This fact will be used below. As a graph of a function, the curvatures of $M_1^\varepsilon$ at the neighbourhood of $x_0$ are given by the formulae
\begin{align*}
|\B_\varepsilon|^2&=\sum_{i,j,k,l=1}^m\sum_{p,q=m+1}^{n+1}Ddf_p(e_i,e_k)Ddf_q(e_j,e_l)H^{i,j}H^{k,l}G^{p,q}\\
\H_\varepsilon&=\frac{1}{m}\sum_{k,l=m+1}^{n+1}\sum_{i,j=1}^mDdf_k(e_i,e_j)H^{i,j}G^{k,l}(\nabla f_l-e_l)
\end{align*}
where $(e_1,\cdots,e_m)$ is an ONB of $T_{x_0}M_1$, $(e_{m+1},\cdots,e_{n+1})$ an ONB of $N_{x_0}M_1$, $f_\varepsilon(w)=\sum_{i=m+1}^{n+1}f_i(w)e_i$, $G_{kl}=\delta_{kl}+\langle\nabla f_k,\nabla f_l\rangle$ and $H_{kl}=\delta_{kl}+\langle df_\varepsilon(e_k),df_\varepsilon(e_l)\rangle$.
Now $f_\varepsilon$ converges in $\mathcal{C}^\infty$ norm to $f$ on any compact subset of $B_0(\varepsilon_0)\setminus\{0\}$, while $|df_\varepsilon|$ and $|Ddf_\varepsilon|$ remain uniformly bounded on $B_0(\varepsilon_0)$ when $\varepsilon$ tends to $0$. By the Lebesgue convergence theorem, we have
\begin{align*}
\int_{M_1^\varepsilon}|\H_\varepsilon|^{\alpha}dv\to \int_{M_1}|\H|^{\alpha}dv\quad\quad\quad \int_{M_1^\varepsilon}|\B_\varepsilon|^{\alpha}dv\to \int_{M_1}|\B|^{\alpha}dv\
\end{align*}
We set $M_\varepsilon$ the $m$-submanifold of $\R^{n+1}$ obtained by gluing $M_1^\varepsilon$ and $N_{c,\varepsilon}$ along their boundaries in a fixed direction $\nu\in N_{x_0}M_1$. Note that $M_\varepsilon$ is a smooth immersion of $M_1\#M_2$.
\begin{center}
\includegraphics[width=4cm]{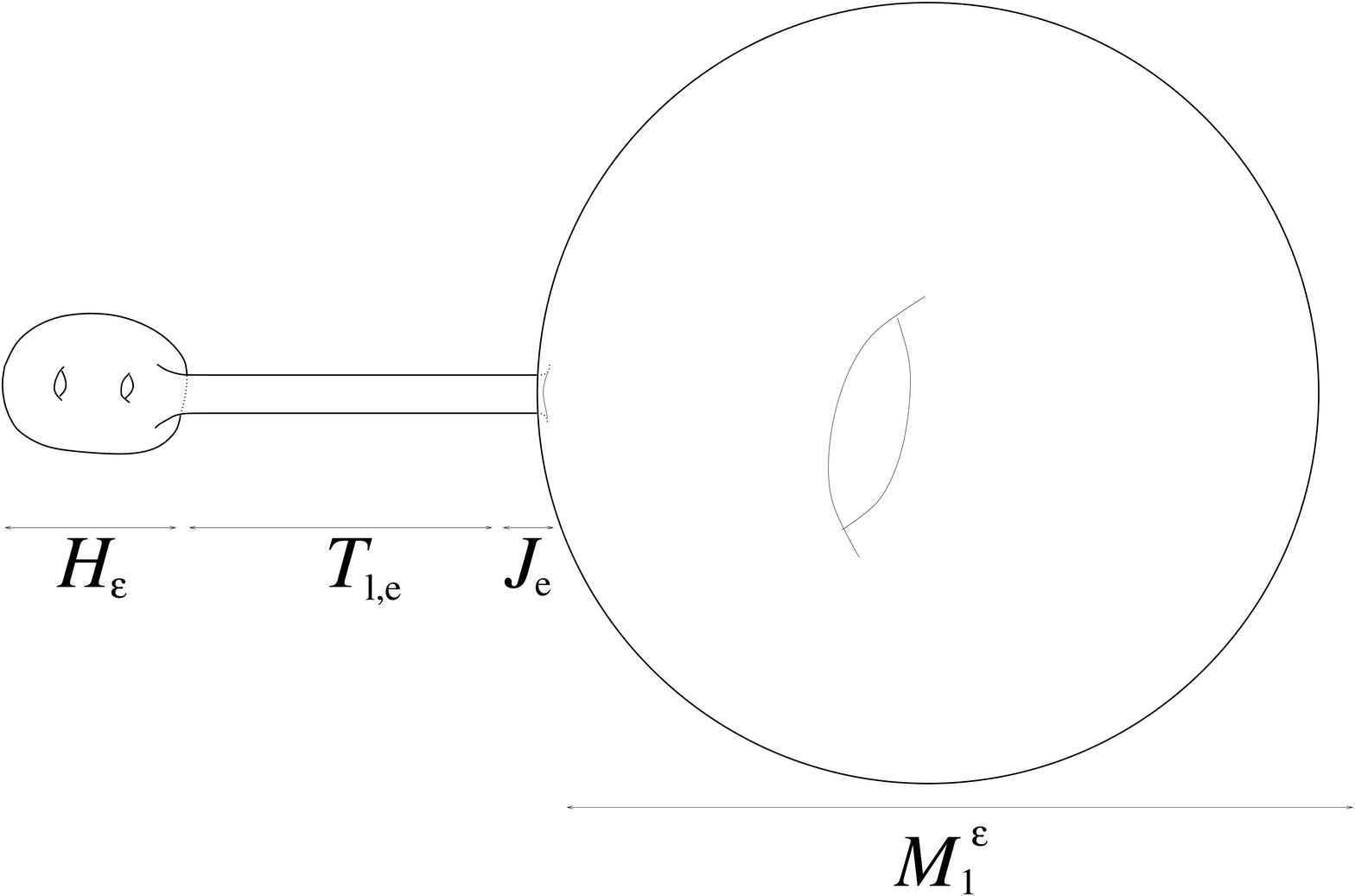}
\end{center}
By the computations above, we get the announced limits 1), 2) and 4) for the sequence $i_k(M_1\#M_2)=M_\frac{1}{k}$ as $k$ tends to $\infty$.

\noindent\underline{Computation of the spectrum}

We will adapt the method developed by C.Ann\'{e} in \cite{Ann}. We set $(\lambda_k)_{k\in\N}$ the spectrum with multiplicities obtained by union the spectrum of $M_1$ and of the spectrum $Sp(P_c)$ of the operator $P(f)=-f''-(m-1)\frac{c'}{c}f'$ on $[0,l]$ with Dirichlet condition at $0$ and Neumann condition at $l$. We denote by $(\mu_k)_{k\in\N}$ the eigenvalues of $M_1$ counted with multiplicities and by $(P_k)_{k\in\N}$ a $L^2$-ONB of eigenfunctions of $M_1$. We set $(\nu_k,h_k)_{k \in\N}$ and $(\lambda_k^\varepsilon,f_k^{\varepsilon})_{k\in\N}$ the corresponding data on $([0,l],c^{n-1}(t)\,dt)$ and $M_\varepsilon$.  We set $\tilde{h}_k^{\varepsilon}$ the function on $M_\varepsilon$ obtained by considering $h_k$ as a function on the cylinder $T_{c,\varepsilon}$, extending it continuously by $0$ on $J_\varepsilon$ and $M_1^\varepsilon$, and by $h_k(l)$ on $H_\varepsilon$. We also set $\tilde{P}_k^{\varepsilon}$ the function on $M_\varepsilon$ which is equal to $\psi_\varepsilon\bigl(d(\partial M_1^\varepsilon,\cdot)\bigr) P_k$ on $M_1^\varepsilon$ (with $\psi_\varepsilon(t)=0$ when $t\leqslant8\varepsilon$, $\psi_\varepsilon(t)=\frac{\ln t-\ln (8\varepsilon)}{-\ln(8\sqrt{\varepsilon})}$ when $t\in[8\varepsilon,\sqrt{\varepsilon}]$ and $\psi_\varepsilon(t)=1$ otherwise) and is extended by $0$ outside $M_1^\varepsilon$. Using the family $(\tilde{h}_k^\varepsilon,\tilde{P}_k^\varepsilon)$ as test functions, the min-max principle easily gives us
\begin{equation}\label{maj}
\lambda_k^\varepsilon\leqslant\lambda_k\bigl(1+\tau(\varepsilon|k,n,c, M_1)\bigr)
\end{equation}

For any $k\in\N$, we set $\alpha_k=\liminf_{\varepsilon\to0} \lambda_k^\varepsilon$, $\varphi^{(1)}_{k,\varepsilon}(x)=\varepsilon^{\frac{m}{2}}(f_k^{\varepsilon})_{|H_\varepsilon\cup J_\varepsilon}(\varepsilon x)$, seen as a function on $H_1\cup J_1$, $\varphi^{(2)}_{k,\varepsilon}(t,x)=\varepsilon^{\frac{m-1}{2}}(f_k^{\varepsilon})_{|T_{c,\varepsilon}}^{~}(t,\varepsilon c(t)x)$ seen as a function on $[0,l]\times\S^{m-1}$ and $\varphi^{(3)}_{k,\varepsilon}$ the function on $M_1$ equal to $f_k^\varepsilon$ on $\{x\in M_1^\varepsilon/\,d(x,\partial M_1^\varepsilon)\geqslant8\varepsilon\}$ and extended harmonically to $M_1$.

Easy computations give us 
\begin{align}
&\int_{H_1\cup J_1}|\varphi_{k,\varepsilon}^{(1)}|^2=\int_{H_\varepsilon\cup J_\varepsilon}|f_k^\varepsilon|^2,\ \ \ \  \int_{H_1\cup J_1}|d\varphi_{k,\varepsilon}^{(1)}|^2=\varepsilon^2\int_{H_\varepsilon\cup J_\varepsilon}|df_k^\varepsilon|^2\label{neg}\\
&\int_{T_{c,\varepsilon}}|f_k^\varepsilon|^2=\int_0^l\bigl(\int_{\S^{m-1}}|\varphi_{k,\varepsilon}^{(2)}(t,u)|^2du\bigr)\sqrt{1+\varepsilon^2(c'(t))^2}c^{m-1}(t)dt,\label{T}\\
&\int_{T_{c,\varepsilon}}|df_k^\varepsilon|^2= \int_0^l\Bigl[\frac{c^{m-1}}{\sqrt{1+\varepsilon^2(c')^2}}\int_{\S^{m-1}}|\frac{\partial\varphi_{k,\varepsilon}^{(2)}}{\partial t}|^2+\frac{\sqrt{1+\varepsilon^2(c')^2}c^{m-1}}{\varepsilon^2c^2}\int_{\S^{m-1}}|d_{\S^{m-1}}\varphi_{k,\varepsilon}^{(2)}|^2\Bigr].\label{T'}
\end{align}

The argument of C. Anne in \cite{Ann} (or of Rauch and Taylor in \cite{RT}) can be adapted to get that there exists a constant $C(M_1)$ such that $\|\varphi_{k,\varepsilon}^{(3)}\|_{H^1(M_1)}\leqslant C\|f_k^\varepsilon\|_{H^1(M_\varepsilon)}$.
Since we have 
$\|f_k^\varepsilon\|_{H^1(M_\varepsilon)}=1+\lambda_k^\varepsilon$, \eqref{maj} gives us $
\|\varphi_{k,\varepsilon}^{(3)}\|_{H^1(M_1)}\leqslant C(k,M_1,l)$
for $\varepsilon \leqslant\varepsilon_0(k,M_1,l)$.
We infer that for any $k\in\N$ there is a subsequence $\varphi_{k,\varepsilon_i}^{(3)}$ which weakly converges to $\tilde{f}_k^{(3)}\in H^1(M_1)$ and strongly in $L^2(M_1)$ and such that $\lim_i\lambda_{k}^{\varepsilon_i}=\alpha_k$. By definitions of $M_1^\varepsilon$ and $\varphi_{k,\varepsilon}^{(3)}$, and since $\mathcal{C}^\infty_0(M_1\setminus\{x_0\})$ is dense in $\mathcal{C}^\infty(M_1)$, it is easy to see that $\tilde{f}_k^{(3)}$ is a distributional (hence a strong) solution to $\Delta\tilde{f}_k^{(3)}=\alpha_k\tilde{f}_k^{(3)}$ on $M_1$ (see \cite{Tak1}, p.206). In particular, either $\tilde{f}_k^{(3)}$ is $0$ or $\alpha_k$ is an eigenvalue of $M_1$.

By the same compactness argument, there exists a subsequence $\varphi_{k,\varepsilon_i}^{(1)}$ which weakly converges to $\tilde{f}^{(1)}_k$ in $H^1(H_{1}\cup J_1)$ and strongly in $L^2(H_{1}\cup J_1)$. By Equalities \eqref{neg}, we get that $\|d\tilde{f}^{(1)}_k\|_{L^2(H_1)}=0$ and so $\tilde{f}^{(1)}_k$ is constant on $H_1$ and on $J_1$ and $\varphi_{k,\varepsilon_i}^{(1)}$ strongly converges to $\tilde{f}^{(1)}_k$ in $H^1(H_{1}\cup J_1)$. Let $\eta:[0,10]\to[0,1]$ be a smooth function such that $\eta(x)=1$ for any $x\leqslant1/2$, $\eta(x)=0$ for any $x\geqslant1$ and $|\eta'|\leqslant4$. We set $s_\varepsilon$ the distance function to $\partial S_\varepsilon=\{0\}\times\frac{\varepsilon}{10}\S^{m-1}$ in $S_\varepsilon=M_1^\varepsilon\cup J_\varepsilon$ and $\theta_\varepsilon$ the volume density of $S_\varepsilon$ in normal coordinate to $\partial S_\varepsilon$. We set $L$ the distance between the two boundary components of $J_1$. By construction of $S_\varepsilon$, we have $\frac{3}{10}\geqslant\theta_\varepsilon(s_\varepsilon,u)=\theta_1(s_\varepsilon/\varepsilon)\geqslant1$ for any $s_\varepsilon\in[0,L\varepsilon]$ and any $u$ normal to $\partial S_\varepsilon$, and $c(M_1)(\frac{s_\varepsilon}{\varepsilon})^{m-1}\geqslant\theta_\varepsilon(s_\varepsilon,u)\geqslant \frac{1}{c(M_1)}(\frac{s_\varepsilon}{\varepsilon})^{m-1}$ for $s_\varepsilon\in[\varepsilon L,8\varepsilon]$. Hence, if we denote by $S_{\partial S_\varepsilon}(r)$ the set of points in $S_\varepsilon$ at distance $r$ from $\partial S_\varepsilon$, we get for any $r\leqslant8+L$ that
\begin{align}\label{Spe}
\int_{S_{\partial S_\varepsilon}(\varepsilon r)}(f_k^\varepsilon)^2&=\int_{\frac{\varepsilon}{10}\S^{m-1}}\Bigl(\int_{\varepsilon r}^{1}\frac{\partial}{\partial s_\varepsilon}[\eta(\cdot)f_k^\varepsilon(\cdot,u)]d s_\varepsilon\Bigr)^2\theta_\varepsilon(r\varepsilon,u) du\nonumber\\
&=\frac{\varepsilon^{m-1}}{10^{m-1}}\int_{\S^{m-1}}\Bigl(\int_{\varepsilon r}^{1}\frac{\partial}{\partial s_\varepsilon}[\eta(\cdot)f_k^\varepsilon(\cdot,\frac{\varepsilon}{10} u)]d s_\varepsilon\Bigr)^2\theta_\varepsilon(r\varepsilon,\frac{\varepsilon}{10} u) du\nonumber\\
\leqslant \frac{c(M_1)\varepsilon^{m-1}}{10^{m-1}}&\int_{\S^{m-1}}\Bigl(\int_0^1\bigl(\frac{\partial}{\partial s_\varepsilon} [\eta(\cdot)f_k^\varepsilon(\cdot,\frac{\varepsilon}{10}u)]\bigr)^2 \theta_\varepsilon(s_\varepsilon,\frac{\varepsilon}{10}u)ds_\varepsilon\Bigr) \Bigl(\int_0^1\frac{1}{\theta_\varepsilon(s_\varepsilon,\frac{\varepsilon}{10}u)}ds_\varepsilon\Bigr) du\nonumber\\
&\ \ \ \ \ \ \ \ \ \ \int_{S_{\partial S_\varepsilon}(\varepsilon r)}(f_k^\varepsilon)^2\leqslant c(M_1)\|f_k^\varepsilon\|_{H^1(S_\varepsilon)}^2\varepsilon|\ln\varepsilon|
\end{align}
which gives us $\varepsilon_i\int_{\partial S_{\varepsilon_i}}(f_k^{\varepsilon_i})^2=\int_{\partial S_1}(\varphi_{k,\varepsilon_i}^{(1)})^2\to \int_{\partial S_1}(\tilde{f}_k^{(1)})^2=0$ (by the trace inequality and the compactness of the trace operator) and so $\tilde{f}_k^{(1)}$ is null on $J_1$.

By \eqref{T'}, and since $c$ is positive and $\mathcal{C}^1$ on $[0,l]$, there exists a subsequence $\varphi_{k,\varepsilon_i}^{(2)}$ which converges weakly to $\tilde{f}^{(2)}_k$ in $H^1([0,l]\times\S^{m-1})$ and strongly in $L^2([0,l]\times\S^{m-1})$. By the trace inequality applied on $[0,l]\times\S^{m-1}$, we also have that $\|\varphi^{(2)}_{k,\varepsilon_i}\|_{L^2(\{l\}\times\S^{m-1})}$ is bounded. Now, since 
$$10^{1-m}\varepsilon_i\int_{\{l\}\times\S^{m-1}}|\varphi^{(2)}_{k,\varepsilon_i}|^2=\varepsilon_i\int_{\{l\}\times \frac{\varepsilon_i}{10}\S^{m-1}}|f_k^{\varepsilon_i}|^2=\varepsilon_i\int_{\partial H_{\varepsilon_i}}|f_k^{\varepsilon_i}|^2=\int_{\partial H_{1}}|\varphi^{(1)}_{k,\varepsilon_i}|^2$$
we get that $\tilde{f}_k^{(1)}=0$ on $H_1$.

We set $h_i(t)=\int_{\S^{m-1}}\varphi_{k,\varepsilon_i}^{(2)}(t,x)dx$ and $h(t)=\int_{\S^{m-1}}\tilde{f}^{(2)}_k(t,x)dx$, we have $h,h_i\in H^1([0,l])$ (with $h_i'(t)=\int_{\S^{m-1}}\frac{\partial\varphi_{k,\varepsilon_i}^{(2)}}{\partial t}(t,x)dx$), $h_i\to h$ strongly in $L^2([0,l])$ and weakly in $H^1([0,l])$. For any $\psi\in\mathcal{C}^\infty([0,l])$ with $\psi(0)=0$ and $\psi'(l)=0$, seen as a function on $T_{c,\varepsilon}$ and extended by $0$ to $S_\varepsilon$ and by $\psi(l)$ to $H_\varepsilon$, we have
\begin{align*}&\int_0^lh'(\psi c^{m-1})'\,dt-(m-1)\int_0^lh'\frac{c'}{c}\psi c^{m-1}\,dt=\int_0^lh'\psi'c^{m-1}\,dt\\
&=\lim_i\int_0^lh_i'(t)\psi'(t)\frac{c^{m-1}}{\sqrt{1+\varepsilon_i^2(c')^2}}\,dt=\lim_i\int_{M_{\varepsilon_i}}\varepsilon_i^\frac{1-m}{2}\langle df_k^{\varepsilon_i},d\psi\rangle=\lim_i\int_{M_{\varepsilon_i}}\varepsilon_i^\frac{1-m}{2}\lambda_k^{\varepsilon_i}f_k^{\varepsilon_i}\psi\\
&=\alpha_k \lim_i\Bigl(\int_{[0,l]\times\S^{m-1}}\varphi^{(2)}_{k,\varepsilon_i}\psi c^{m-1}\sqrt{1+\varepsilon_i^2(c')^2}+\psi(l)\varepsilon_i^\frac{1-m}{2}\int_{H_{\varepsilon_i}}f_k^{\varepsilon_i}\Bigr)\\
&=\alpha_k\int_0^lh\psi c^{m-1}\,dt
\end{align*}
where we have used that $\varepsilon_i^\frac{1-m}{2}|\int_{H_{\varepsilon_i}}f_k^{\varepsilon_i}|\leqslant\sqrt{\varepsilon_i}\sqrt{\Vol(H_1)\int_{H_{\varepsilon_i}}(f_k^{\varepsilon_i})^2}$.
Since $c$ is positive, we get that $h$ is a weak solution to $y''+(m-1)\frac{c'}{c}y'+\alpha_ky=0$ on $[0,l]$ and that $h'(l)=0$.
 Since we have $10^{m-1}\int_{\partial S_{\varepsilon_i}}(f_k^{\varepsilon_i})^2=\int_{\{0\}\times\S^{m-1}}(\varphi_{k,\varepsilon_i}^{(2)})^2\to \int_{\{0\}\times\S^{m-1}}(\tilde{f}_k^{(2)})^2$ (by compactness of the trace operator) and $\int_{\partial S_{\varepsilon_i}}(f_k^{\varepsilon_i})^2\to0$ by \eqref{Spe}, we get $|h(0)|^2\leqslant\Vol\S^{m-1}\int_{\{0\}\times\S^{m-1}}(\tilde{f}_k^{(2)})^2=0$, and so $h(0)=0$. 
Since $d_{\S^{m-1}}\varphi_{k,\varepsilon_i}^{(2)}$ converges weakly to $d_{\S^{m-1}}\tilde{f}^{(2)}_k$ in $L^2([0,l]\times\S^{m-1})$, Inequality \eqref{T'} gives $\|d_{\S^{m-1}}\tilde{f}^{(2)}_k\|_{L^2([0,l]\times\S^{m-1})}=0$, i.e. $\tilde{f}^{(2)}_k$ is constant on almost every sphere $\{t\}\times\S^{m-1}$ of $[0,l]\times\S^{m-1}$. We infer that $\tilde{f}^{(2)}_k$ is equal to $\frac{1}{\Vol\S^{m-1}}h$ seen as a function on $[0,l]\times\S^{m-1}$ and so, either $\tilde{f}^{(2)}_k=0$ or $\alpha_k$ is an eigenvalue of $P_c$ for the Dirichlet condition at $0$ and the Neumann condition at $l$.

To conclude, we have
\begin{align*}
&\int_{M_1}\tilde{f}_k^{(3)}\tilde{f}_l^{(3)}+\int_{[0,l]\times\S^{m-1}} \tilde{f}_k^{(2)}\tilde{f}_l^{(2)}c^{m-1}\\
&=\lim_i\int_{M_1}\varphi_{k,\varepsilon_i}^{(3)}\varphi_{l,\varepsilon_i}^{(3)}+\int_{J_1\cup H_1}\varphi_{k,\varepsilon_i}^{(1)}\varphi_{l,\varepsilon_i}^{(1)}+\int_{[0,l]\times\S^{m-1}} \varphi_{k,\varepsilon_i}^{(2)}\varphi_{l,\varepsilon_i}^{(2)}c^{m-1}\sqrt{1+\varepsilon_i^2(c')^2}\\
&=\lim_i\int_{M_{\varepsilon_i}}f_k^{\varepsilon_i}f_l^{\varepsilon_i}-\lim_i\int_{M_1^{\varepsilon_i}\cap B(\partial M_1^{\varepsilon_i},8\varepsilon_i)}f_k^{\varepsilon_i}f_l^{\varepsilon_i}+\lim_i\int_{M_1\setminus\bigl( M_1^{\varepsilon_i}\setminus B(\partial M_1^{\varepsilon_i},8\varepsilon_i)\bigr)}\varphi^{(3)}_{k,\varepsilon_i}\varphi^{(3)}_{l,\varepsilon_i}\\
&=\delta_{kl},
\end{align*}
where, in the last equality, we have used that $\varphi^{(3)}_{k,\varepsilon_i}$ and $\varphi^{(3)}_{l,\varepsilon_i}$ converge strongly to $\tilde{f}_k^{(3)}$ and $\tilde{f }_l^{(3)}$ in $L^2(M_1)$, that $\Vol \bigl(M_1\setminus(M_1^{\varepsilon_i}\setminus B(\partial M_1^{\varepsilon_i},8\varepsilon_i)\bigr)$ tends to $0$ with $\varepsilon_i$, and the inequality $$\int_{M_1^{\varepsilon_i}\cap B(\partial M_1^{\varepsilon_i},8\varepsilon_i)}(f_k^{\varepsilon_i})^2\leqslant c(M_1)\| f_k^{\varepsilon_i}\|_{H^1(M_{\varepsilon_i})}\varepsilon_i^2|\ln\varepsilon_i|$$
which is obtained by integration of Inequality \eqref{Spe} with respect on $r\in[L,L+8]$. Note that we need the inclusion Hence, by the min-max principle, we have $\alpha_k\geqslant\lambda_k$ for any $k\in\N$.
We conclude that $\lim_{\varepsilon\to 0}\lambda_k(M_\varepsilon)=\lambda_k$ for any $k\in\N$.
Note that in the case $c\equiv\frac{1}{10}$,
the spectrum of $P_c$ with Dirichlet condition at $0$ and Neumann condition at $l$ is $\{\frac{\pi^2}{l^2}(k+\frac{1}{2})^2,\,k\in\N\}$ with all the multiplicities equal to $1$. 

\noindent\underline{End of the proof of Theorem \ref{constrfond}}{~}\\
In the sequence of immersions constructed above we have all the properties announced for $T=[x_0,x_0+l\nu]$, except the point 3) since all the eigenvalues of $[0,l]$ appear in the spectrum of the limit. To get the result for $T=[x_0,x_0+l\nu]$, we fix $k\in\N$ and $l_k$ small enough such that $\lambda_1([0,l_k])>2k$ and with $l/l_k\in\N$. We then consider an immersion of $N_1=M_1\#\S^m$ such that $d_H(M_1\cup[x_0,x_0+l_k\nu],N_1)\leqslant2^{-\frac{l}{l_k}}$, $|\lambda_p(N_1)-\lambda_p(M_1)|\leqslant2^{-\frac{l}{l_k}}$ for any $p$ such that $\lambda_p(M_1)\leqslant k$ (it is possible for this choice of $l_k$ according to the weak version of the theorem proved above) and the same for the point 4) and 2) of the theorem (equality up to an error bounded by $2^{-\frac{l}{l_k}}$). We now iterate the procedure to get a sequence of $\frac{l}{l_k}$ immersions $N_2=N_1\#\S^m,\cdots,\ N_{\frac{l}{l_k}-1}=N_{\frac{l}{l_k}-2}\#\S^m$, $N_\frac{l}{l_k}=N_{\frac{l}{l_k}-1}\# M_2$ such that 
\begin{align*}
&d_H(N_{i},M_1\cup[x_0,x_0+il_k\nu])\leqslant i 2^{-\frac{l}{l_k}},\ \ \ |\Vol N_{i+1}\setminus N_i|\leqslant 2^{-\frac{l}{l_k}}\Vol M_1,\\
&\int_{N_{i+1}\setminus N_i}|\B|^{(m-1)\frac{k-1}{k}}\leqslant 2^{-\frac{l}{l_k}}\int_{M_1}|\H|^{(m-1)\frac{k-1}{k}},\\
&|\int_{N_{i+1}\setminus N_i}|\B|^{m-1}-\Vol\S^{m-1}l_k|\leqslant 2^{-\frac{l}{l_k}}\int_{M_1}|\B|^{m-1},\\
&|\int_{N_{i+1}\setminus N_i}|\H|^{m-1}-\Vol\S^{m-1}(\frac{m-1}{m})^{m-1} l_k|\leqslant 2^{-\frac{l}{l_k}}\int_{M_1}|\H|^{m-1},\\
&|\lambda_p(N_i)-\lambda_p(N_{i+1})|\leqslant 2^{-\frac{l}{l_k}}\mbox{ for any }i\leqslant \frac{l}{l_k}-1\mbox{ and any }p\leqslant N.
\end{align*} 
The sequence $i_k(M_1\# M_2)=N_\frac{l}{l_k}$ satisfies Theorem \ref{constrfond} for $T=[x_0+l\nu]$.

In the procedure to get the theorem for $T=[x_0,x_0+l\nu]$ from its weak version, we add at each step the new small cylinder $T_{\varepsilon,l_N}$ along the same axis $x_0+\R_+\nu$. This can be easily generalized to get the lemma for $T=\cup_i T_i$ any finite union of finite trees, each intersecting $M_1$, and such that $\sum_im_1(T_i)\leqslant l$. Finally, if $T$ is a closed subset such that $m_1(T)\leqslant C(m)\int|\H|^{m-1}$ and $M_1\cup T$ is connected, then each connected component of $T$ intersects $M_1$. Arguing as in the proof of Theorem \ref{Main}, we get that $T$ has only a finite number of connected component leaving $(M_1)_\frac{1}{k}$. Since any closed, connected $F_i\subset\R^{n+1}$ with $m_1(F_i)$ finite can be approximated in Hausdorff distance by a sequence of finite trees $T_{i,k}$ (such that $m_1(T_{i,k})\to_k m_1(F_i)$ see \cite{Fa}), by the same kind of diagonal procedure, Theorem \ref{Main} is obtained for any $T$ with finite $m_1(T)$. In the case $m_1(T)=\infty$ then the $L^{m-1}$ control of the curvature in condition 2) are automatically satisfied and the other conditions are fulfilled as above by approximating $M_1\cup T$ by a finite number of finite trees. Finally, if $M_1\cup T$ is not bounded, then we replace it by $\bigl((M_1\cup T)\cap B_0(k)\bigr)\cup k\S^n$, which is closed, connected and bounded. We then get the result in its whole generality by a diagonal procedure.


\begin{thebibliography}{99}
\bibitem{Au} {\sc E. Aubry}, {\em Pincement sur le spectre et le volume en courbure de Ricci positive},  Ann. Sci. \'{E}cole Norm. Sup. (4) {\bf 38} (2005), n°3, p. 387--405.
\bibitem{AG1} {\sc E. Aubry, J.-F. Grosjean}, {\em Spectrum of hypersurfaces with small extrinsic radius or large $\lambda_1$ in Euclidean spaces}, preprint (2012) arXiv:?.
\bibitem{AGR1} {\sc E. Aubry, J.-F. Grosjean, J. Roth}, {\em Hypersurfaces with small extrinsic radius or large $\lambda_1$ in Euclidean spaces}, preprint (2010) arXiv:1009.2010v1.
\bibitem{Ann} {\sc C. Ann\'{e}}, {\em Spectre du Laplacien et \'{e}crasement d'anses}, Ann. Sci. \'{E}cole Norm. Sup. (4) {\bf 20} (1987), p. 271--280.
\bibitem{colgros} {\sc B. Colbois, J.-F. Grosjean}, {\em A pinching theorem for the first eigenvalue of the Laplacian on hypersurfaces of the Euclidean space}, Comment. Math. Helv. {\bf 82}, (2007), p. 175--195.
\bibitem{HK} {\sc T. Hasanis, D. Koutroufiotis}, {\em Immersions of bounded mean curvature}, Arc. Math. {\bf 33}, (1979), p. 170--171.
\bibitem{Fa} {\sc K.J. Falconer} {\em The geometry of fractal sets} Cambridge 1985.
\bibitem{GR} {\sc J.-F. Grosjean, J. Roth} {\em Eigenvalue pinching and application to the stability and the almost umbilicity of hypersurfaces}, to appear in Math. Z.
\bibitem{Mi-Si} {\sc J.H. Michael, L.M. Simon}
{\em Sobolev and mean-value inequalities on generalized submanifolds of $R\sp{n}$}, Comm. Pure Appl. Math. {\bf 26} (1973), p. 361--379.
\bibitem{RT} {\sc J. Rauch, M. Taylor} {\em Potential and scattering theory on wildly perturbed domains}, J. Func. Anal. 18 (1975), p. 27--59.
\bibitem{Re} {\sc R.C. Reilly}, {\em On the first eigenvalue of the Laplacian for compact submanifolds of Euclidean space}, Comment. Math. Helv. {\bf 52}, (1977), p. 525--533.
\bibitem{roth} {\sc J. Roth}, {\em Extrinsic radius pinching for hypersurfaces of space forms}, Diff. Geom. Appl. {\bf 25}, No 5, (2007), P. 485--499.
\bibitem{Tak1} {\sc J. Takahashi}, {\em Collapsing of connected sums and the eigenvalues of the Laplacian}, J. Geom. Phys. {\bf 40} (2002), p. 201--208.
\bibitem{Top} {\sc P. Topping}, {\em Relating diameter and mean curvature for submanifolds of Euclidean space}, Comment. Math. Helv. {\bf 83}, (2008), no. 3, p. 539--546.
\end{thebibliography}
\end{document}